\numberwithin{equation}{section}
\newtheorem{theorem}{Theorem}[section]
\newtheorem{proposition}[theorem]{Proposition}
\newtheorem{corollary}[theorem]{Corollary}
\newtheorem{lemma}[theorem]{Lemma}
\theoremstyle{definition}
\newtheorem{definition}[theorem]{Definition}
\theoremstyle{remark}
\DeclareMathOperator{\sech}{sech}
\DeclareMathOperator{\am}{am}
\DeclareMathOperator{\Span}{span}
\newcommand{\R}{\mathbf{R}}
\newcommand{\N}{\mathbf{N}}
\newcommand{\B}{\mathcal{B}}
\newcommand{\K}{\mathrm{K}}
\newcommand{\Ap}{\mathcal{A}_\mathrm{pin}}
\newcommand{\amcn}{\am_{1,p}}
\begin{document}

\title{Stability of flat-core pinned $p$-elasticae}

\author{Tatsuya Miura}
\address[T.~Miura]{Department of Mathematics, Graduate School of Science, Kyoto University,
Kitashirakawa Oikawa-cho, Sakyo-ku, Kyoto 606-8502, Japan}
\email{tatsuya.miura@math.kyoto-u.ac.jp}

\author{Kensuke Yoshizawa}
\address[K.~Yoshizawa]{Faculty of Education, Nagasaki University, 1-14 Bunkyo-machi, Nagasaki, 852-8521, Japan}
\email{k-yoshizaw@nagasaki-u.ac.jp
}

\keywords{$p$-elastica, pinned boundary condition, stability, flat-core.}
\subjclass[2020]{49Q10 and 53A04.}

\date{\today}

\dedicatory{Dedicated to Professor Yoshikazu Giga,\\
with gratitude on the occasion of receiving the Kodaira Kunihiko Prize.}

\begin{abstract}
    We classify the stability of flat-core $p$-elasticae in $\R^d$ subject to the pinned boundary condition.
    Together with previous work, this completes the classification of stable pinned $p$-elasticae in $\R^d$ for all $p\in(1,\infty)$ and $d\geq2$.
\end{abstract}

\maketitle

\section{Introduction}

Building upon the authors' previous work \cites{MY_Crelle, MY_JLMS}, in the present paper, we complete the classification of the stability of pinned $p$-elasticae in Euclidean space.

Let $p\in(1,\infty)$. 
For an immersed curve $\gamma \subset \mathbf{R}^d$, the \emph{$p$-bending energy} $\mathcal{B}_p$ is defined by 
\[
\mathcal{B}_p[\gamma]:=\int_\gamma |\kappa|^p \,ds,
\]
where $s$ denotes the arclength parameter and $\kappa:=\partial_s^2\gamma$ denotes the curvature vector. 
In general, a critical point of $\mathcal{B}_p$ under the fixed-length constraint is called a \emph{$p$-elastica}.
The classical case $p=2$ corresponds to Euler's elastica, and it is well known as a mathematical model of elastic rods.
Since Euler's work in the 18th century, the variational problem for $\mathcal{B}_2$ has been extensively studied.
More recently, there has also been a surge of interest in $\mathcal{B}_p$ for general exponents $p\in(1,\infty)$, and nowadays the study of the $p$-bending energy has branched into several directions
(see e.g.\ \cite{MM98, AM03, DFLM, nabe14, LP22, MY_AMPA, MY_IUMJ, MY_Crelle, MY_JLMS, AGM, BM04, BM07, MN13, AM17, FKN18,  NP20, OPW20, Poz20, SW20, BHV, BVH, Poz22, GPT23, OW23, DMOY} and references therein).

In this paper we focus on $p$-elasticae under the following pinned boundary condition.
Given $L>0$ and $P_0,P_1 \in \mathbf{R}^d$ with $|P_0-P_1|<L$, we consider $p$-elasticae in the admissible set 
\[
\mathcal{A}_{\rm pin}=\mathcal{A}_{\rm pin}(P_0,P_1,L):=
\{\gamma \in W^{2,p}_{\rm arc}(0,L;\mathbf{R}^d) \mid \gamma(0)=P_0, \ \gamma(L)=P_1 \}, 
\]
where $W^{2,p}_{\rm arc}(0,L;\mathbf{R}^d)$ denotes the set of arclength parametrized $W^{2,p}$-curves of length $L$, i.e., 
\[
W^{2,p}_{\rm arc}(0,L;\mathbf{R}^d) := \{\gamma \in W^{2,p}(0,L;\mathbf{R}^d) \mid \,\! |\gamma'|\equiv1 \}. 
\]
A critical point of $\B_p$ in $\mathcal{A}_{\rm pin}$ is called a \emph{pinned $p$-elastica} (see Section~\ref{subsect:p-elastica} and also \cite{MY_IUMJ} for details).
For a general $p\in(1,\infty)$, the complete classification of pinned $p$-elasticae has been already obtained; see \cite[Theorem 1.1]{MY_IUMJ} in the planar case and \cite[Theorem 1.6]{GMarXiv2501} in general dimensions.
We say that a pinned $p$-elastica in $\mathcal{A}_{\rm pin}$ is \emph{stable} if it is a local minimizer of $\B_p$ in $\mathcal{A}_{\rm pin}$.

The aim of this manuscript is to fully characterize the stability of pinned $p$-elasticae in Euclidean space.
To the best of the authors' knowledge, the only known results on the stability of $p$-elasticae are due to Gruber--P\'ampano--Toda in the $2$-sphere \cite{GPT23} and to the authors in the plane \cites{MY_Crelle, MY_JLMS}.
While the difference in the ambient space is apparent, these studies also differ in other essential aspects.
More precisely, in \cite{GPT23}, Gruber--P\'ampano--Toda showed the instability of closed smooth free $p$-elasticae in the $2$-sphere for $p\in(0,1)$, where ``free'' means the absence of the length constraint.
Their proof is based on finding a suitable variation in the negative direction of the second variation of $\mathcal{B}_p$.
The validity of this method partly relies on their smooth regularity assumption, ruling out possible less regular $p$-elasticae (cf.\ \cite{SW20}).
On the other hand, in \cites{MY_Crelle, MY_JLMS}, the authors treated closed and pinned planar $p$-elasticae in the $W^{2,p}$-Sobolev class for $p\in(1,\infty)$.
In particular, the fixed-length constraint makes it more difficult to find energy-decreasing variations. 
Furthermore, the regularity of pinned $p$-elasticae is generically lost \cite{MY_IUMJ}, while the classical second variation of $\mathcal{B}_p$ may not exist on non-smooth $p$-elasticae (cf.\ \cite[Appendix A]{MY_Crelle}).
For these reasons, in \cite{MY_Crelle}, the authors recently developed a new approach which we call the ``cut-and-paste'' trick, not relying on the second variation but the geometric invariance of the functional.
This paper further develops the cut-and-paste approach.

The stability of many pinned $p$-elasticae has already been clarified by previous work.
Roughly speaking, the only remaining case is the class of \emph{flat-core $p$-elasticae}, first introduced by Watanabe \cite{nabe14} in the planar case.
Recall that a flat-core $p$-elastica is defined as a concatenation of certain loops and segments (see Figure~\ref{fig:quasi_all} and \cite{nabe14,MY_AMPA}, as well as \cite{GMarXiv2501} in general dimensions): such critical points appear if and only if $p>2$ and, under the pinned boundary condition, additionally satisfy $|P_0-P_1|\geq\frac{1}{p-1}L$ \cite[Theorem 1.1]{MY_IUMJ}.
More precisely, in the planar case, the authors showed that a pinned $p$-elastica is unstable if it is neither a (unique) global minimizer nor a flat-core $p$-elastica \cite[Corollary 2.14]{MY_Crelle}.
In addition, Gruen and the first author showed that any non-planar pinned $p$-elastica is of flat-core type and has larger energy than the planar global minimizer \cite[Theorem 1.6, Corollary 1.7]{GMarXiv2501}.
However, the stability of flat-core pinned $p$-elasticae still leaves several cases open, and resolving these is the main result of this paper.

Now we formulate our main results.
We first focus on the planar case.
The stability of planar flat-core $p$-elasticae is particularly delicate since it strongly depends on \emph{how the loops and segments are concatenated}.
Indeed, according to \cite[Theorem 2.13]{MY_Crelle}, a flat-core pinned $p$-elastica is unstable in $\R^2$ if it is not \emph{quasi-alternating}, that is, if either there is a loop touching an endpoint as in Figure~\ref{fig:quasi_all} (i), or there are adjacent loops in opposite directions as in Figure~\ref{fig:quasi_all} (ii) (see also Definition~\ref{def:quasi-alternating}).
On the other hand, it is shown in \cite{MY_JLMS} that a flat-core $p$-elastica is stable in $\R^2$ if it is \emph{alternating}, i.e., any segment does not touch an endpoint nor lie between any two loops as in Figure~\ref{fig:quasi_all} (iii) (see also Proposition~\ref{prop:alternating}). 
The alternating class can naturally be regarded as a generic subclass of the quasi-alternating class.
However, the remaining case of quasi-alternating but not alternating flat-core $p$-elasticae as in Figure~\ref{fig:quasi_all} (iv) was an open problem (see \cite{MY_JLMS}*{Problem 1.4}).

\begin{figure}[htbp]
\centering
\includegraphics[width=100mm]{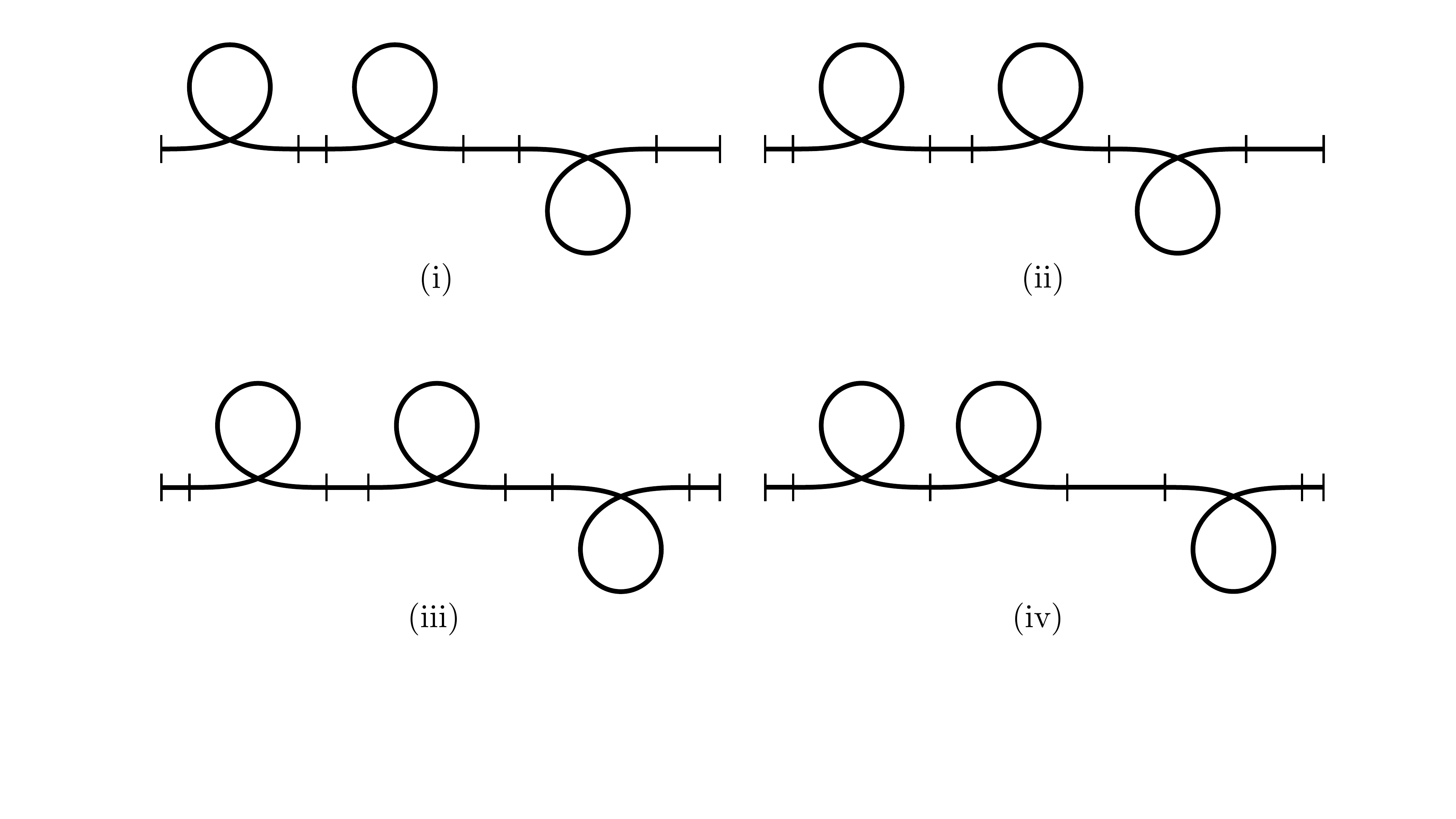} 
\caption{Examples of flat-core pinned $p$-elasticae.
(i), (ii) Not quasi-alternating (unstable \cite{MY_Crelle}).
(iii) Alternating (stable \cite{MY_JLMS}).
(iv) Quasi-alternating (stable, Theorem~\ref{thm:2D-quasi}).
[The figure is reproduced from \cite[Figure 1]{MY_JLMS}, with permission from Wiley.]
}
\label{fig:quasi_all}
\end{figure}

Our first main result resolves this last-standing open problem.

\begin{theorem}\label{thm:2D-quasi}
Any quasi-alternating flat-core pinned $p$-elastica is stable in $\R^2$.
\end{theorem}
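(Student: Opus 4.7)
The plan is to adapt the cut-and-paste trick of \cite{MY_Crelle, MY_JLMS} so as to reduce the stability of a quasi-alternating flat-core pinned $p$-elastica $\gamma\in\Ap$ to the already-known stability of alternating flat-core pinned $p$-elasticae (Proposition~\ref{prop:alternating}). By the quasi-alternating property, all loops of $\gamma$ can be oriented consistently and no loop touches an endpoint, so the only non-alternating features of $\gamma$ are (a) segments adjacent to $P_0$ or $P_1$, and (b) segments lying between consecutive same-direction loops. Since every flat-core loop contributes the same value $E_\ast$ to the $p$-bending energy, one has $\B_p[\gamma]=nE_\ast$, where $n$ denotes the number of loops in $\gamma$.

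Given any admissible competitor $\gamma_\varepsilon\in\Ap$ with $\|\gamma_\varepsilon-\gamma\|_{W^{2,p}}<\varepsilon$ small, I would first construct a transformation $\Phi(\gamma_\varepsilon)$ with the following properties: (i) $\Phi(\gamma_\varepsilon)$ lies in some admissible set $\Ap(P_0,\tilde P_1,L)$ for a possibly relocated endpoint $\tilde P_1\in\R^2$; (ii) $\B_p[\Phi(\gamma_\varepsilon)]=\B_p[\gamma_\varepsilon]$; and (iii) $\Phi(\gamma_\varepsilon)$ is $W^{2,p}$-close to an alternating flat-core pinned $p$-elastica $\tilde\gamma\in\Ap(P_0,\tilde P_1,L)$ with $\B_p[\tilde\gamma]=nE_\ast=\B_p[\gamma]$. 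The construction of $\Phi$ proceeds by cutting $\gamma_\varepsilon$ at a suitable point along each sub-arc that corresponds to a segment of $\gamma$ of type (b), and then reflecting one side of each cut across the line spanned by the corresponding segment of $\gamma$; since each such segment is straight along the flat-core direction, the reflection converts an adjacency of same-direction loops into an adjacency of opposite-direction loops, preserves $\B_p$ by isometric invariance, and---after a careful choice of the cut point making the tangent of $\gamma_\varepsilon$ coincide with the flat-core direction at that point---maintains $W^{2,p}$-regularity of the resulting curve. The type (a) segments at the endpoints would be preserved intact by $\Phi$, since pinning prevents any reflection there; their contribution to $\B_p[\gamma_\varepsilon]$ is non-negative by comparison with a straight line and so can be absorbed into the final estimate.

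Once $\Phi(\gamma_\varepsilon)$ is in hand, the conclusion follows by applying Proposition~\ref{prop:alternating} to $\tilde\gamma$ in $\Ap(P_0,\tilde P_1,L)$: stability of $\tilde\gamma$ yields $\B_p[\Phi(\gamma_\varepsilon)]\geq\B_p[\tilde\gamma]$, and combining with $\B_p[\Phi(\gamma_\varepsilon)]=\B_p[\gamma_\varepsilon]$ and $\B_p[\tilde\gamma]=\B_p[\gamma]$ gives $\B_p[\gamma_\varepsilon]\geq\B_p[\gamma]$, proving stability of $\gamma$.

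The main obstacle I expect is the careful selection of cut points for the reflection step: for $\Phi(\gamma_\varepsilon)$ to remain of class $W^{2,p}$, the tangent of $\gamma_\varepsilon$ at each cut point must coincide \emph{exactly} with the chosen reflection axis, which is fixed by the underlying geometry of $\gamma$. For a generic competitor $\gamma_\varepsilon$ the tangent along a segment sub-arc is only close to (and not exactly equal to) the flat-core direction, so existence of such a cut point is non-trivial. Overcoming this will require either an intermediate-value-type argument producing a cut point with the prescribed tangent (valid under a smallness condition on $\varepsilon$), or a refined cut-and-paste in which a small rigid motion is simultaneously applied to the reflected piece to absorb the tangent mismatch without altering $\B_p$. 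A quantitative continuity argument must then guarantee that $\Phi(\gamma_\varepsilon)\to\tilde\gamma$ in $W^{2,p}$ as $\varepsilon\to0$, which is what is needed for Proposition~\ref{prop:alternating} to apply.
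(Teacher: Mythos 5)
Your reduction target is wrong, and the error traces to a misreading of the two classes involved. In this paper ``alternating'' means that every $L_j$ in \eqref{eq:N-loop-flat-core} is strictly positive, with the loop directions $\sigma_j$ arbitrary; it does \emph{not} mean that the loops alternate in direction (nor need the loops of a quasi-alternating curve ``be oriented consistently'' --- mixed directions are allowed as long as touching pairs agree). Consequently, a positive-length segment between two same-direction loops (your type (b)) is not a non-alternating feature at all, and neither are the endpoint segments (your type (a)), which are in fact required by Definition~\ref{def:quasi-alternating}(i). The only obstruction to being alternating is a zero-length joint $L_j=0$, where two loops touch, necessarily with the same direction $\sigma_{j-1}=\sigma_j$. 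Your reflection maneuver therefore does nothing where it is performed --- cutting along a positive-length segment and flipping one side leaves the configuration alternating anyway --- and it fails where a modification is actually needed: applied at a touching joint, reflecting one side across the line through the joint parallel to $e_1$ converts a same-direction touching pair into an opposite-direction touching pair, which is not quasi-alternating and is known to be \emph{unstable} by \cite{MY_Crelle}*{Theorem 2.13} (see Figure~\ref{fig:quasi_all}(ii)). Proposition~\ref{prop:alternating} then provides no lower bound $\B_p[\Phi(\gamma_\varepsilon)]\geq\B_p[\tilde\gamma]$, so your final comparison collapses; indeed, instability of the reflected limit configuration means that competitors with strictly smaller energy exist arbitrarily close to it.

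The missing idea is \emph{insertion} rather than reflection: at each touching joint $s_i$ one inserts a unit straight segment $\bar\gamma(s)=se_1$ into the competitor, producing a curve in the enlarged admissible class $\Ap(P_0,P_1+Me_1,L+M)$ with unchanged bending energy, whose $W^{2,p}$-limit is an alternating flat-core pinned $p$-elastica $\Gamma$ (the same-direction loops are now separated by genuine segments) with $\B_p[\Gamma]=\B_p[\gamma]$; applying Proposition~\ref{prop:alternating} to $\Gamma$ in the enlarged class and arguing by contradiction along a sequence $\gamma_n\to\gamma$ with $\B_p[\gamma_n]<\B_p[\gamma]$ finishes the proof. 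You did correctly anticipate the one technical point this requires: on each competitor one must locate a cut point near $s_i$ whose tangent is \emph{exactly} $e_1$, and this follows from an intermediate-value argument because the tangential angle of $\gamma$ is strictly monotone near $s_i$ --- which holds precisely because the touching loops have the same direction, so the signed curvature $\pm2\sech_p$ keeps a fixed sign through the joint and vanishes only at $s_i$. Your plan, by contrast, would need exact-tangent cut points in the interior of flat segments, where the curvature of $\gamma$ vanishes identically and no such monotonicity is available, so even the intermediate-value step you propose is not justified along your chosen cut loci.
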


The proof is based on a simple geometric maneuver.
If there are two adjacent loops in the same direction, then we can always insert a horizontal segment into the perturbation (see Figure~\ref{fig:2D-quasi}).
This procedure essentially reduces the problem to the alternating case, where the stability is already known.

As discussed above, Theorem \ref{thm:2D-quasi} together with previous work provides a complete characterization of the stability of stable pinned $p$-elasticae in $\R^2$.

\begin{corollary}\label{cor:2D-stability}
    Let $d=2$ and $\gamma\in\mathcal{A}_{\rm pin}$. Then $\gamma$ is a local minimizer of $\B_p$ in $\mathcal{A}_{\rm pin}$ if and only if $\gamma$ is either a global minimizer or a quasi-alternating flat-core pinned $p$-elastica.
\end{corollary}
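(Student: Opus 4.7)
The plan is to derive Corollary~\ref{cor:2D-stability} as an essentially immediate consequence of Theorem~\ref{thm:2D-quasi} combined with the dichotomy results already available in the authors' series, so the proof reduces to assembling existing pieces.

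For the \emph{if} direction, a global minimizer of $\B_p$ in $\Ap$ is trivially a local minimizer, while the stability of quasi-alternating flat-core pinned $p$-elasticae is exactly the content of Theorem~\ref{thm:2D-quasi}. Nothing further is needed.

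For the \emph{only if} direction, the plan is to argue by elimination. Any local minimizer $\gamma \in \Ap$ is in particular a critical point of $\B_p$ under the fixed-length constraint, hence a pinned $p$-elastica in the sense of Section~\ref{subsect:p-elastica}. The first step is to invoke \cite[Corollary 2.14]{MY_Crelle}: in the planar case, any pinned $p$-elastica that is neither the unique global minimizer nor a flat-core $p$-elastica is unstable. This restricts the candidates to these two classes. The second step is to invoke \cite[Theorem 2.13]{MY_Crelle}: a flat-core pinned $p$-elastica which fails to be quasi-alternating, i.e., which has a loop touching an endpoint (Figure~\ref{fig:quasi_all} (i)) or two adjacent loops in opposite directions (Figure~\ref{fig:quasi_all} (ii)), is likewise unstable. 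Applying these two exclusions in sequence yields the stated dichotomy.

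No genuine obstacle is anticipated: Theorem~\ref{thm:2D-quasi} provides precisely the missing piece needed to close the chain of implications, and the remaining steps are direct citations of \cite{MY_Crelle}. The only routine verification is that an $\Ap$-local minimizer automatically satisfies the Euler--Lagrange equation within the $W^{2,p}$-Sobolev setting, which is standard and has been carried out throughout \cites{MY_Crelle, MY_JLMS, MY_IUMJ}.
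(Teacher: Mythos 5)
Your proposal is correct and follows essentially the same route as the paper, whose proof is the one-line combination of Theorem~\ref{thm:2D-quasi} with \cite{MY_Crelle}*{Corollary 2.14}; your splitting of the elimination step into \cite{MY_Crelle}*{Corollary 2.14} plus \cite{MY_Crelle}*{Theorem 2.13} just unpacks what that corollary already encodes (as the paper itself notes in Section~\ref{sect:2D}, it gives that every stable pinned $p$-elastica is either a global minimizer or quasi-alternating flat-core).
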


Recall that the latter case can occur if and only if $p>2$ and $|P_0-P_1|>\frac{1}{p-1}L$, and 
otherwise, the only local minimizers are global minimizers (unique up to reflection if $|P_0-P_1|>0$, while up to reflection and rotation if $|P_0-P_1|=0$).
See also \cite[Remark 2.10]{MY_JLMS} for the slightly delicate critical case $|P_0-P_1|=\frac{1}{p-1}L$. 

Now we turn to the stability of non-planar pinned $p$-elasticae.
Our second main result asserts that, in stark contrast to the planar case, all flat-core pinned $p$-elasticae are unstable in higher codimensions. 

\begin{theorem}\label{thm:nD-stability}
Any flat-core pinned $p$-elastica is unstable in $\R^d$ for $d\geq3$.
\end{theorem}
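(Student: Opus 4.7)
The plan is to construct, for any flat-core pinned $p$-elastica $\gamma_0\in\mathcal{A}_{\rm pin}$ in $\R^d$ with $d\geq 3$, an explicit one-parameter family $\gamma_\epsilon\in\mathcal{A}_{\rm pin}$ near $\gamma_0$ with $\mathcal{B}_p[\gamma_\epsilon]<\mathcal{B}_p[\gamma_0]$ for small $\epsilon>0$. Non-quasi-alternating planar flat-cores are already unstable in $\R^2$ by \cite{MY_Crelle}, and any $\R^2$-admissible variation is admissible in $\R^d$, so the substantive task is to handle the (quasi-)alternating planar case and the non-planar flat-cores. In either setting $p>2$, every loop $\ell$ lies in a unique $2$-plane, and, outside the already-handled boundary case of a loop touching an endpoint, each loop has an adjacent segment $\sigma$ of positive length $L_\sigma$. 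After an ambient isometry I arrange $\ell\cup\sigma\subset\mathrm{span}(e_1,e_2)$ with $\sigma$ tangent to $e_1$, and I fix a unit vector $e_3$ orthogonal to this $2$-plane.

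The perturbation tilts the tangent out of the plane. For a continuous profile $\psi\in W^{1,\infty}(0,L)$ with $\int_0^L\psi\,ds=0$, I set
\[
T_\epsilon(s):=\cos(\epsilon\psi(s))\,T_0(s)+\sin(\epsilon\psi(s))\,e_3, \qquad \gamma_\epsilon(s):=P_0+\int_0^s T_\epsilon(\tau)\,d\tau.
\]
This is automatically arc-length parametrized and reaches $P_1$ to first order in $\epsilon$. I take $\psi\equiv a$ throughout $\ell$, $\psi\equiv b:=-a l^{\ast}/L_\sigma$ on the bulk of $\sigma$, and $\psi\equiv 0$ elsewhere, with smooth transition layers of small fixed width confined to the interior of segments (where $\kappa_0\equiv 0$). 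The residual $O(\epsilon^2)$ endpoint mismatch in the $e_1$-direction can be cancelled by an auxiliary in-plane correction $\gamma_\epsilon\mapsto\gamma_\epsilon+\epsilon^2\eta$; since $\gamma_0$ is a critical point and $\gamma_\epsilon$ is an $\epsilon$-perturbation of it, this correction contributes only $O(\epsilon^3)$ to the bending energy.

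Using $T_0\cdot T_0'=0$ together with $T_0,T_0'\perp e_3$ one computes
\[
|T_\epsilon'|^2=\cos^2(\epsilon\psi)\kappa_0^2+\epsilon^2(\psi')^2,
\]
so on $\ell$ (where $\psi'\equiv 0$ and $\kappa_0>0$) the Taylor expansion of $|T_\epsilon'|^p$ yields the strictly negative leading term $-\tfrac{p}{2}\epsilon^2 a^2\,\mathcal{B}_p[\ell]$, while on each segment (where $\kappa_0\equiv 0$) one gets $\epsilon^p\int|\psi'|^p\,ds=O(\epsilon^p)$, concentrated in the transition layers. Since $p>2$, both the $O(\epsilon^p)$ segment terms and the $O(\epsilon^3)$ correction terms are subdominant to $\epsilon^2$, and therefore
\[
\mathcal{B}_p[\gamma_\epsilon+\epsilon^2\eta]-\mathcal{B}_p[\gamma_0]=-\tfrac{p}{2}\epsilon^2 a^2\,\mathcal{B}_p[\ell]+o(\epsilon^2)<0
\]
for all sufficiently small $\epsilon>0$, contradicting local minimality of $\gamma_0$.

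The main obstacle I expect is the rigorous construction and $O(\epsilon^3)$-control of the in-plane endpoint correction $\eta$, needed most delicately in the single-loop case where the second-order constraint $\int_0^L T_0\psi^2\,ds=0$ fails non-trivially; this amounts to a standard application of the implicit function theorem to the endpoint-constraint map, together with the estimate that $\eta$ can be supported on the smooth portion of $\gamma_0$. For configurations with two or more loops this step becomes immediate by the symmetric choice $\psi\equiv+a$ on one loop and $\psi\equiv-a$ on another: since $\int_\ell T_0\,ds=0$ on every individual loop, both the first- and second-order endpoint constraints vanish identically, and the strictly negative $\epsilon^2$-contribution persists from each loop, yielding the instability with no correction at all.
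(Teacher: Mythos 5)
Your route is genuinely different from the paper's: the paper never exhibits an energy-decreasing variation at all, but instead uses energy-\emph{preserving} cut-and-paste perturbations (rotating the curve about the tangent line at a loop's vertex, then deleting/inserting straight segments to restore chord and length, as in Proposition~\ref{prop:instability-quasi-3D}; a cyclic shift when a loop touches an endpoint, Proposition~\ref{prop:instability-NOTquasi-3D}) which converge in $W^{2,p}$ with \emph{equal} energy while violating necessary conditions for criticality ($C^2$ regularity, respectively $k(0)=k(L)=0$). Unfortunately, your second-order computation has a fatal flaw at its core. The identity $\int_\ell T_0\,ds=0$ on an individual loop, on which your ``no correction needed'' multi-loop case rests entirely, is false: the flat-core loop $\gamma_b^\sigma$ of \eqref{def:gamma_b} is not a closed curve, and its net displacement is $-\tfrac{2}{p-1}\K_p(1)\,e_1\neq0$ (this is exactly where \eqref{eq:model_distance} comes from). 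Moreover, since the second-order in-plane defect is $-\tfrac{\epsilon^2}{2}\int\psi^2\,T_0\,ds$ and $\psi^2=a^2$ on \emph{both} loops regardless of the sign choice $\pm a$, the antisymmetric choice cancels only the first-order $e_3$-defect; the second-order defects from the two loops \emph{add} rather than cancel. In every case, then, the perturbed curve of length $L$ falls short of $P_1$ by a vector of size exactly $\epsilon^2$ along the chord, and a correction is unavoidable.

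That brings the second gap: the claim that the endpoint-restoring correction costs only $O(\epsilon^3)$ ``since $\gamma_0$ is a critical point'' is unjustified, and is precisely where the problem's difficulty is hiding. Criticality kills the first variation only for variations \emph{fixing the endpoints}; your correction must move an endpoint by order $\epsilon^2$ at fixed length, and the resulting boundary term is governed by the first-integral force of the Euler--Lagrange equation, which equals $\lambda T$ on the straight segments with $\lambda\neq0$ for flat-core pinned $p$-elasticae. Stretching the chord against this tension costs energy of order $\epsilon^2$ -- the same order as your claimed gain $\tfrac{p}{2}\epsilon^2a^2\mathcal{B}_p[\ell]$ -- so the sign of the net $\epsilon^2$-coefficient requires an explicit balance that you never compute. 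Theorem~\ref{thm:2D-quasi} is a proof-of-concept that such bookkeeping cannot be waved through: quasi-alternating flat-cores are \emph{stable} in $\R^2$, i.e.\ analogous curvature-reducing, chord-shortening perturbations there lose to the tension term. (Two smaller issues: $\gamma_\epsilon+\epsilon^2\eta$ leaves the unit-speed class $W^{2,p}_{\rm arc}$, so the correction must itself be built as a tangent rotation; and deferring the loop-at-endpoint case to the planar results of \cite{MY_Crelle} does not cover \emph{non-planar} configurations with $L_1=0$, for which the paper needs Proposition~\ref{prop:instability-NOTquasi-3D}.) As written, the argument does not establish instability.
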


The proof is divided into two cases: if a segment touches an endpoint, then we construct a new, non-planar perturbation (see Figure \ref{fig:3D-quasi}) to bring the problem into the framework of the cut-and-paste trick \cite{MY_Crelle}; if a loop touches an endpoint, then our previous argument in the planar case \cite[Proposition 6.4]{MY_Crelle} directly extends to the non-planar problem.

As a consequence of Theorem \ref{thm:nD-stability} and previous work, in higher codimensions, any local minimizer must be a global minimizer, which is thus planar and unique up to rotation.
\begin{corollary}\label{cor:nD-stable-unique}
    Let $d\geq3$ and $\gamma\in\mathcal{A}_{\rm pin}$.
    Then $\gamma$ is a local minimizer of $\B_p$ in $\mathcal{A}_{\rm pin}$ if and only if $\gamma$ is a global minimizer.
\end{corollary}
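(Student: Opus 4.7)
My plan is to reduce the statement to the combination of Theorem~\ref{thm:nD-stability}, the planar stability classification in Corollary~\ref{cor:2D-stability}, and the general-dimension classification of pinned $p$-elasticae already available in the literature. The ``if'' direction is immediate, so I will focus on showing that every local minimizer is a global minimizer.

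First I would observe that any local minimizer $\gamma\in\mathcal{A}_{\rm pin}$ is in particular a critical point of $\mathcal{B}_p$, hence a pinned $p$-elastica in $\R^d$. The dimension-$d$ classification \cite[Theorem 1.6]{GMarXiv2501} then provides a clean dichotomy: either $\gamma$ is planar, or $\gamma$ is a non-planar flat-core pinned $p$-elastica. The non-planar alternative is ruled out directly by Theorem~\ref{thm:nD-stability}, which asserts instability of every flat-core $p$-elastica in $\R^d$ with $d\geq3$.

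Next I would handle the planar case by an embedding reduction. After a rigid motion we may assume that the image of $\gamma$ lies in a $2$-plane $\Pi\cong\R^2\subset\R^d$ containing both $P_0$ and $P_1$. Every $W^{2,p}$-perturbation supported in $\Pi$ is in particular a $W^{2,p}$-admissible perturbation in $\R^d$, so $\gamma$ inherits the local minimality property within the planar admissible set $\mathcal{A}_{\rm pin}\subset W^{2,p}(0,L;\R^2)$. Corollary~\ref{cor:2D-stability} then leaves only two possibilities: either $\gamma$ is a planar global minimizer, or $\gamma$ is a quasi-alternating flat-core pinned $p$-elastica. The latter, however, is still a flat-core $p$-elastica when viewed as a curve in $\R^d$, so Theorem~\ref{thm:nD-stability} again rules it out.

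To finish, I would invoke \cite[Corollary 1.7]{GMarXiv2501}, which guarantees that the planar global minimum coincides with the $\R^d$ global minimum, since no non-planar pinned $p$-elastica attains a smaller energy than the planar minimizer. Consequently $\gamma$ is a global minimizer of $\mathcal{B}_p$ in $\mathcal{A}_{\rm pin}$, as required. I do not anticipate a serious analytical obstacle in this argument: Theorem~\ref{thm:nD-stability} carries all of the weight, and the present corollary is essentially a bookkeeping reduction combining it with the planar classification of Corollary~\ref{cor:2D-stability} and the dimensional classification of \cite{GMarXiv2501}.
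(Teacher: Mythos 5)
Your proposal is correct and follows essentially the same route as the paper, whose proof is precisely the combination of \cite[Theorem 1.6]{GMarXiv2501}, Corollary~\ref{cor:2D-stability}, and Theorem~\ref{thm:nD-stability}; your write-up merely makes explicit the intermediate steps (the planar/non-planar dichotomy, the restriction of local minimality to a fixed $2$-plane, and the energy comparison from \cite[Corollary 1.7]{GMarXiv2501} identifying the planar and ambient global minima) that the paper leaves compressed.
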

Theorems~\ref{thm:2D-quasi} and \ref{thm:nD-stability} reveal a peculiar stability-transition phenomenon between 2D and 3D: planar, pinned, quasi-alternating flat-core $p$-elasticae are stable in $\mathbf{R}^2$ but unstable in $\mathbf{R}^3$. 
Such 2D/3D transition phenomena frequently occur in elastica problems.
Langer--Singer \cite{LS_85} showed that spatially stable closed elasticae are uniquely given by singly-covered circles, which in particular implies that multiply-covered circles are unstable in $\R^3$, while they are stable in $\mathbf{R}^2$ (see also \cite{MY_Crelle}*{Corollary 7.3} for the $p$-counterpart).
In addition, as shown in \cite{Miura_LiYau}, there may not exist a closed curve in $\R^2$ which attains equality in a Li--Yau type inequality (providing a lower bound for the normalized bending energy in terms of multiplicity), while in $\R^3$ for any multiplicity there is a closed curve attaining equality (see also \cite{GMarXiv2501} for the $p$-counterpart).

This paper is organized as follows. 
In Section~\ref{sect:preliminary} we prepare notation and recall the definition and some properties of flat-core pinned $p$-elasticae. 
We then prove Theorem~\ref{thm:2D-quasi} in Section~\ref{sect:2D}, and Theorem~\ref{thm:nD-stability} in Section~\ref{sect:3D}.

\subsection*{Acknowledgments}
The first author is supported by JSPS KAKENHI Grant Numbers JP23H00085, JP23K20802, and JP24K00532.
The second author is supported by JSPS KAKENHI Grant Number JP24K16951.

\section{Preliminary}\label{sect:preliminary}

We introduce the concatenation of curves $\gamma_1:[a_1,b_1] \to \R^d$ with $L_1=b_1-a_1$ and $\gamma_2:[a_2,b_2] \to \R^d$ with $L_2=b_2-a_2$ by
\begin{equation*}
    (\gamma_1 \oplus \gamma_2)(s) := \begin{cases}
        \gamma_1(s+a_1), & s \in [0,L_1],\\
        \gamma_2(s+a_2-L_1)+ \gamma_1(b_1)-\gamma_2(a_2), & s \in [L_1,L_1+L_2].
    \end{cases}
\end{equation*}
We also define inductively $\bigoplus_{j=1}^N \gamma_j= \gamma_1 \oplus \dots \oplus \gamma_N := (\gamma_1 \oplus \dots \oplus \gamma_{N-1})\oplus \gamma_N$.
In addition, for $P\in \R^d$ and $\gamma:[a,b] \to \R^d$ we define $(P\oplus\gamma)(s):=P+\gamma(s)-\gamma(a)$ for $s\in[a,b]$.

\subsection{$p$-Hyperbolic functions}

To begin with, we recall $p$-hyperbolic functions (only for $p>2$) introduced in \cite{MY_AMPA}.
For $p>2$, we define
\[
\mathrm{F}_{1,p}(x,1):=\displaystyle \int_0^{x} \frac{d\phi}{|\cos \phi|^{\frac{2}{p} } }, \quad x\in\R,
\]
and
\begin{align*} 
\K_{p} (1):= \mathrm{F}_{1,p}(\tfrac{\pi}{2},1)=
\displaystyle \int_0^{\frac{\pi}{2}} \frac{d\phi}{(\cos \phi)^{\frac{2}{p} } } < \infty.  
\end{align*}
Here, $\mathrm{F}_{1,p}(x,1)$ (resp.\ $\K_{p}(1)$) corresponds to the incomplete (resp.\ complete) $p$-elliptic integral of the first kind of modulus $q=1$ in \cite{MY_AMPA}*{Definition 3.1}.

\begin{definition}[$p$-Hyperbolic functions] \label{def:p-sech}
Let $p>2$.
We define $\amcn(x,1)$ by the inverse functions of $\mathrm{F}_{1,p}(x,1)$, i.e., for $x\in\R$, 
\begin{align} \nonumber
x= \int_0^{\amcn(x,1)} \frac{1}{ (\cos\phi)^{\frac{2}{p}} }\,d\phi.
\end{align}
The \emph{$p$-hyperbolic secant} $\sech_p x$ is defined by
\begin{align}\label{eq:sech_p} 
 \sech_p x:= 
\begin{cases}
 (\cos \amcn(x,1))^{\frac{2}{p}}, \quad & x\in (-\K_p(1), \K_p(1)), \\
 0, &x\in \R \setminus (-\K_p(1), \K_p(1)).
\end{cases}
\end{align}
The \emph{$p$-hyperbolic tangent} $\tanh_p x$ is defined by
\[\tanh_p x:=\int_0^x (\sech_p t)^p dt, \quad x\in \R. \]
\end{definition}

It is already known in \cite{MY_AMPA}*{Proposition 3.10} that $\sech_p $ is an even nonnegative continuous function on $\R$ and,  in $[0, \K_{p}(1))$, strictly decreasing from $1$ to $0$. 
Moreover, a straightforward calculation yields
\begin{align}\label{eq:diff^2-sech_p}
\frac{d^2}{dx^2}(\sech_p{x})^{p-1} = -2 \frac{p-1}{p} \sech_p{x} \big( 2 (\sech_p{x})^p -1 \big).
\end{align}
We also have
\begin{align}\label{eq:diff^2-tanh_p}
\frac{d^2}{dx^2}(\tanh_p{x})=-2\sin\amcn(x,1)\cos^{1+\frac{2}{p}}\amcn(x,1).   
\end{align}

\subsection{$p$-Elastica}\label{subsect:p-elastica}
In this paper, following \cite{GMarXiv2501}*{Definition 1.5}, we call $\gamma\in \mathcal{A}_{\rm pin}$ a \emph{pinned $p$-elastica} if there exists $\lambda\in\R$ such that
\begin{align}\label{eq:def-pinned_p-elastica}
\frac{d}{d\varepsilon} \Big(\B_p[\gamma+\varepsilon\eta]+ \lambda \mathcal{L}[\gamma+\varepsilon\eta]\Big)\Big|_{\varepsilon=0}=0
\end{align}
for all $\eta\in C^\infty_0(0,L;\mathbf{R}^d)$.
Here $\mathcal{L}$ denotes the length functional $\mathcal{L}[\gamma]:=\int_\gamma ds$, and the class $C^\infty_0$ means $C^\infty$ functions vanishing at the endpoints.
Thanks to the multiplier method, this is the standard first-order necessary condition for $\gamma\in\Ap$ to be a local minimizer of $\B_p$ in $\Ap$ (see \cite{MY_IUMJ} for details).
More generally, $\gamma\in W^{2,p}_{\rm arc}(0,L;\R^d)$ is called a \emph{$p$-elastica} if \eqref{eq:def-pinned_p-elastica} holds for all $\eta\in C^\infty_{\rm c}(0,L;\mathbf{R}^d)$ (with compact support).

Now we recall the definition of flat-core $p$-elasticae more precisely.
To this end we first define auxiliary curves: 
Let $\{e_1,\dots,e_d\}$ be the canonical basis of $\R^d$.
For $L \geq0$ we define $\gamma_\ell^{L}:[0,L]\to\R^d$ by
\[
\gamma_\ell^{L}(s) := -se_1,
\]
a line segment of length $L$.
For $p>2$ and $\sigma \in \mathbf{S}^{d-2} \subset \Span\{e_2,\dots,e_d\} \subset \R^d$, we define $\gamma_b^{\sigma}:[-\K_p(1),\K_p(1)] \to \R^d$ by 
\begin{align}    
   \gamma_b^{\sigma}(s) &:= (2 \tanh_p s -s)e_1+ \tfrac{p}{p-1} (\sech_p s)^{p-1}\sigma, \label{def:gamma_b}
\end{align}
a ``borderline-type'' finite-length loop contained in the plane spanned by $e_1$ and $\sigma$.
Then, an alternative concatenation of the above segments $\gamma_\ell^{L}$ (allowing $L=0$) and loops $\gamma_b^{\sigma}$ is called a flat-core $p$-elastica (see also \cite[Definition 3.9]{GMarXiv2501}).

In this paper we are interested in flat-core $p$-elasticae arising as pinned $p$-elasticae.
By \cite{GMarXiv2501}*{Theorem 1.6} and \cite{MY_IUMJ}*{Theorem 1.1} we know that there do exist pinned $p$-elasticae of flat-core type. 
To be more precise, $\gamma \in \Ap$ is a \emph{flat-core pinned $p$-elastica} if $p>2$ and $|P_1-P_0|\in[\frac{1}{p-1}L,L)$, and
if there are $N\in \N$, $r\in[\frac{1}{p-1},1)$, $\sigma_1, \dots, \sigma_N \in \mathbf{S}^{d-2}\subset \Span\{e_2,\dots,e_d\}$, and $L_1, \ldots, L_{N+1}\geq0$ such that a similar transformation and reparametrization of $\gamma$ is given by
    \begin{align} \label{eq:N-loop-flat-core}
        \gamma_{\mathrm{flat}}:=\bigg( \bigoplus_{j=1}^N \big( \gamma_{\ell}^{L_j} \oplus \gamma_{b}^{\sigma_j} \big) \bigg) \oplus \gamma_{\ell}^{L_{N+1}}
    \end{align}
and in addition, the numbers $p, r, N$ and $(L_1, \ldots, L_{N+1})$ satisfy  
    \begin{align} \label{eq:sum-flatparts}
    \sum_{j=1}^{N+1}L_j = 2N\frac{r-\frac{1}{p-1}}{1-r} \K_{p}(1).
    \end{align}
Note that if $\gamma\in\Ap$ is a flat-core pinned $p$-elastica as above, then we need to have $r=\frac{|P_0-P_1|}{L}$.
On the other hand, $(\sigma_1,\ldots,\sigma_N)$ is arbitrary, and also $N$ and $(L_1, \ldots, L_{N+1})$ are arbitrary whenever \eqref{eq:sum-flatparts} holds.
In addition, any curve $\gamma_{\mathrm{flat}}$ of the form \eqref{eq:N-loop-flat-core} satisfies
\begin{align}
    & \mathcal{L}[\gamma_{\mathrm{flat}}]=\bar{L}:=2N\K_p(1)+\sum_{j=1}^{N+1} L_j, \label{eq:model_length}\\
    & \gamma_{\mathrm{flat}}(\bar{L}) - \gamma_{\mathrm{flat}}(0) = -\bigg(\frac{2N}{p-1}\K_{p}(1) + \sum_{j=1}^{N+1} L_j \bigg)e_1. \label{eq:model_distance}
\end{align}

Finally, we discuss geometric properties of each loop which we will use later.
The tangent direction of each loop $\gamma_b^{\sigma}$ at the vertex is given by 
\begin{align}\label{eq:loop_tangent_top}
    \big(\gamma_b^{\sigma}\big)'(0) = e_1,
\end{align}
which follows from \eqref{def:gamma_b} combined with the fact that $(\tanh_p)'(0)=\sech_p 0=1$ and that $(\sech_p)'(0)=0$ as $\sech_p$ is even. 
On the other hand, the tangent directions at the endpoints are
\begin{align}\label{eq:loop_tangent_ends}
    \big(\gamma_b^{\sigma}\big)'(\pm\K_p(1)) = -e_1,
\end{align}
which follows from \eqref{def:gamma_b} combined with the direct computations
\begin{equation*}
    (\sech_p^{p-1})'(\pm\K_p(1))=0, \quad (\tanh_p)'(\pm\K_p(1))=\sech_p^p(\pm\K_p(1))=0.
\end{equation*}
Furthermore, using \eqref{eq:diff^2-sech_p} and \eqref{eq:diff^2-tanh_p}, we deduce from \eqref{def:gamma_b} that the curvature vector $(\gamma_b^\sigma)''$ satisfies 
\begin{align}\label{eq:loop_curvature_vector}
    (\gamma_b^\sigma)''(\pm\K_p(1))=0 \quad \text{and} \quad (\gamma_b^\sigma)''(0)=-2\sigma.
\end{align}

\section{The stability of planar quasi-alternating flat-core}\label{sect:2D}

In this section we focus on the planar case. 
The stability of planar $p$-elasticae is better understood than the non-planar case, and indeed it is already shown in \cite{MY_Crelle}*{Corollary 2.14} that every stable pinned $p$-elastica must be either a global minimizer or a quasi-alternating flat-core $p$-elastica.
Here let us precisely recall the definition of the quasi-alternating class (see Figure~\ref{fig:quasi_all} (iv)).

\begin{definition}[Quasi-alternating]\label{def:quasi-alternating}
Let $d=2$.
Let $\gamma \in \Ap$ be a (planar) flat-core pinned $p$-elastica. 
Let $N\in \mathbf{N}$, $\{\sigma_j\}_{j=1}^N \subset\mathbf{S}^0=\{+e_2,-e_2\}$, and $L_1,\ldots, L_{N+1}\geq0$ be such that $\gamma$ is of the form \eqref{eq:N-loop-flat-core} up to similarity and reparametrization. 
We say that $\gamma$ is \emph{quasi-alternating} if the following two conditions hold: 
\begin{itemize}
\item[(i)] $L_1>0$ and $L_{N+1} >0$.
\item[(ii)] For $j\in\{2,\ldots, N\}$, if $L_j=0$, then $\sigma_j = \sigma_{j-1}$. 
\end{itemize}
\end{definition}

The aim of this section is to prove that any quasi-alternating flat-core planar $p$-elastica is a local minimizer, which together with \cite{MY_Crelle}*{Corollary 2.14} ensures Theorem~\ref{thm:2D-quasi}.
As already mentioned, a quasi-alternating flat-core pinned planar $p$-elastica is known to be stable if it is alternating \cite{MY_JLMS}*{Theorem 1.3}. 
Here, we say that a flat-core $p$-elastica $\gamma \in \Ap$ is \emph{alternating} if, up to similarity and reparametrization, $\gamma$ is of the form \eqref{eq:N-loop-flat-core} with $L_1, \ldots, L_{N+1}>0$. 
More precisely, we already know the following

\begin{proposition}
\label{prop:alternating}
Let $d=2$.
If $\gamma\in \Ap$ is an alternating flat-core pinned $p$-elastica, then $\gamma$ is stable in $\Ap$. 
\end{proposition}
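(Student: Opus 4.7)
The plan is to reduce the $N$-loop alternating case to the stability of a single borderline loop, exploiting the positive slack provided by the hypothesis $L_j>0$ for all $j=1,\dots,N+1$. The basic scheme is to cut a $W^{2,p}$-nearby competitor $\tilde{\gamma}\in\Ap$ at well-chosen points lying inside its perturbed segment regions, and then to estimate each piece independently.

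First I would use the Sobolev embedding $W^{2,p}\hookrightarrow C^1$ to obtain $C^1$-closeness of $\tilde{\gamma}$ to $\gamma_{\mathrm{flat}}=\bigoplus_{j=1}^{N}(\gamma_\ell^{L_j}\oplus\gamma_b^{\sigma_j})\oplus\gamma_\ell^{L_{N+1}}$. Since $\gamma_{\mathrm{flat}}'\equiv -e_1$ on each segment region and every $L_j$ is strictly positive, one can select cut parameters $0<s_1<\dots<s_N<\bar{L}$ with each $s_j$ in the interior of the $j$-th perturbed segment region, where $\tilde{\gamma}'(s_j)$ is arbitrarily close to $-e_1$. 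This is precisely where the alternating hypothesis is indispensable: a configuration with some interior $L_j=0$ would offer no room for such cuts, and a loop touching an endpoint (the obstruction ruled out by $L_1,L_{N+1}>0$) would prevent the outermost cuts.

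Next, decompose $\tilde{\gamma}$ into $N$ loop-containing pieces and $N+1$ segment-containing pieces. For each segment piece, use the trivial bound $\B_p\geq 0=\B_p[\gamma_\ell^{L_j}]$. For each loop piece, after applying suitable rigid motions (which preserve $\B_p$), compare with the model loop $\gamma_b^{\sigma_j}$ by invoking local minimality of a single borderline loop under clamped endpoint-tangent data. Summing over the $2N+1$ pieces should then give $\B_p[\tilde{\gamma}]\geq\B_p[\gamma_{\mathrm{flat}}]$, establishing stability.

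The main obstacle is establishing local minimality of a single borderline loop $\gamma_b^{\sigma}$, which is the genuine one-loop heart of the problem. Additional technical difficulties arise because the tangent $\tilde{\gamma}'(s_j)$ only approximately equals $-e_1$ and the length inherited by each loop piece need not equal $2\K_p(1)$ exactly. One must therefore either slide the cut parameters to enforce exact matching of tangent and length (and track the cumulative global effect of such adjustments) or allow small mismatches and absorb them into the comparison with quantitative stability estimates. Handling these issues cleanly, and in particular proving the one-loop stability without appealing to a second variation (which is anyway problematic for non-smooth $p$-elasticae), is where the bulk of the technical work from \cite{MY_JLMS} is expected to lie.
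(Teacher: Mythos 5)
The first thing to note is that this paper does not prove Proposition~\ref{prop:alternating} at all: it is imported verbatim from \cite{MY_JLMS}*{Theorem 1.3} (``we already know the following''), so the paper's own ``proof'' is a citation, and your sketch is an attempted reconstruction of the cited work rather than of anything argued here. As a reconstruction it has the right flavor (cut a $W^{2,p}$-nearby competitor inside the segment regions, compare piecewise, use $L_j>0$ to make room for the cuts), but beyond the gap you concede it contains steps that fail as stated. Most concretely, the per-piece inequality $\B_p[\text{loop piece}]\geq\B_p[\gamma_b^{\sigma_j}]$ is false: a competitor can transfer arclength from the zero-energy segment pieces to the loop pieces, and since the Lagrange multiplier of a flat-core $p$-elastica is nonzero (already for $p=2$ the borderline loop $k=2\sech s$ has $\lambda=1$), the minimal clamped energy of a one-loop piece is strictly decreasing in its length near $2\K_p(1)$; a slightly longer near-loop therefore has strictly smaller energy than $\gamma_b^{\sigma_j}$. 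The reason this cannot produce a genuine energy decrease is the coupling of the pieces through the global length and endpoint constraints (enlarging a loop changes its displacement at a different rate, $\tfrac{2}{p-1}\K_p(1)$ versus $2\K_p(1)$ per unit length, than a segment does), so summing \emph{independent} per-piece estimates, as your scheme proposes, cannot work; one needs a penalized functional of the form $\B_p+\lambda\mathcal{L}$ or a comparison in which the first-order boundary and length terms cancel exactly across adjacent pieces.

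Two further points undercut the fallback you offer. First, ``absorb small mismatches via quantitative stability estimates'' is obstructed by a structural degeneracy you do not mention: flat-core pinned $p$-elasticae through $\gamma$ form a continuum of equal-energy critical points in $\Ap$ (slide the $L_j$ subject to \eqref{eq:sum-flatparts}), so no coercive quantitative stability in terms of $W^{2,p}$-distance to $\gamma$ can hold, and the mismatch errors are genuinely first-order quantities that must cancel rather than merely be small. Second, ``slide the cut parameters to enforce exact matching of tangent'' is not generally possible in a segment interior: there the tangential angle of $\gamma$ is \emph{constant}, so the competitor's angle may stay strictly on one side of $\pi$ and never attain it. Contrast this with the paper's proof of Theorem~\ref{thm:2D-quasi}, where exact matching $\gamma_n'(s_{i,n})=e_1$ is obtained precisely because the cuts are made at loop-to-loop junctions, where strict monotonicity of the tangential angle \eqref{eq:partition} plus $C^1$-convergence yields an intermediate-value argument. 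Finally, the one-loop clamped local minimality that you call the ``genuine one-loop heart'' is indeed essentially the entire content of \cite{MY_JLMS}; deferring it, together with the gluing issues above, leaves the proposal a plan rather than a proof, whereas within this paper the legitimate justification of the proposition is simply the reference to \cite{MY_JLMS}*{Theorem 1.3}.
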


Now we prove Theorem~\ref{thm:2D-quasi}.

\begin{proof}[Proof of Theorem~\ref{thm:2D-quasi}]
Let $\gamma\in \Ap$ be a quasi-alternating flat-core pinned $p$-elastica. 
Up to similarity and reparametrization, we may assume that $\gamma$ is of the form \eqref{eq:N-loop-flat-core}: just for geometrical simplicity, here we consider its reflection
\begin{align}\label{eq:alternating-model}
        \gamma = R_\pi \bigg( \Big( \bigoplus_{j=1}^N \big(\gamma_\ell^{L_j} \oplus \gamma_b^{\sigma_j}\big) \Big) \oplus \gamma_\ell^{L_{N+1}} \bigg),
    \end{align}
with some $N\in \N$, $\{\sigma_j\}_{j=1}^N\subset\{e_2,-e_2\}$, and $(L_1, \ldots, L_{N+1})\in[0,\infty)^{N+1}$, where $R_\pi$ denotes the rotation matrix through angle $\pi$.
In this case, we have $\gamma\in \Ap(P_0,P_1, L)$ with $P_0=0$, $P_1=\ell e_1$, $\ell:= \frac{2N}{p-1}\K_p(1)+\sum_{j=1}^{N+1} L_j$, and $L:=2N\K_p(1) + \sum_{j=1}^{N+1} L_j$, cf.\ \eqref{eq:model_length} and \eqref{eq:model_distance}.

We now define the set of indices satisfying Definition~\ref{def:quasi-alternating} (ii), that is,
\[
J:=\{ j\in\{2, \ldots, N\} \mid L_j=0, \ \sigma_{j-1}=\sigma_j \} 
\]
and write $J=\{j_1, \ldots, j_{M}\}$ with $M=\#J$.
We may assume that $J\neq \emptyset$, i.e., $M\geq1$, since if otherwise then $\gamma$ is stable by Proposition~\ref{prop:alternating}.
Choose a partition $\{s_i\}_{i=0}^{M+1}$ of $[0,L]$ in such a way that  
\begin{equation*}
    \text{$s_0=0$, $s_{M+1}=L$, and $s_{i}=2(j_i-1)\K_p(1)+\sum_{\nu=1}^{j_i} L_\nu$ for $i\in\{1,\ldots,M\}$,}
\end{equation*}
which corresponds to the joint of two adjacent loops (see Figure~\ref{fig:2D-quasi}).
Then, in view of \eqref{eq:alternating-model} and \eqref{eq:loop_tangent_ends}, we see that for all $i\in \{1,\ldots, M\}$
\begin{align}\label{eq:direction_is_e_1}
    \gamma'(s_i) = R_\pi \big(\gamma_b^{\sigma_{j_i}}\big)'(\K_p(1)) = e_1.
\end{align}
Recall from \cite{MY_AMPA}*{Theorem 1.3} that the signed curvature $k_b^{\pm}$ of $\gamma_b^{\pm e_2}$ is given by $k_b^{\pm}(s)=\pm2\sech_p{s}$.
Combining this with $\sigma_{j_i}=\sigma_{j_i-1}$, we find that the signed curvature $k$ of $\gamma$ satisfies $k(s_i)=0$ and, if $\sigma_{j_i}=e_2$ (resp.\ $\sigma_{j_i}=-e_2$), then $k(s)>0$ (resp.\ $k(s)<0$) for all $0<|s-s_i|<\K_p(1)$. 
Moreover, this implies that 
\begin{align}\label{eq:partition}
\text{the tangential angle $\theta$ of $\gamma$ is strictly monotone in a neighborhood of } s_{i}
\end{align}
for all $i\in \{1,\ldots, M\}$. 

Now we prove the stability of $\gamma$ by contradiction.
Suppose that $\gamma$ is not a local minimizer of $\B_p$ in $\Ap(P_0,P_1,L)$.
Then there exists $\{\gamma_n\}_{ n\in\mathbf{N}} \subset \Ap(P_0,P_1,L)$ such that $\gamma_n\to\gamma$ in $W^{2,p}(0,L;\R^2)$, and hence also in $C^1([0,L];\R^2)$, and 
\begin{align}\label{eq:energy_gamma_n}
\B_p[\gamma_n] < \B_p[\gamma] \quad \text{for all }\  n\in\mathbf{N}.
\end{align} 
Combining the $C^1$-convergence with \eqref{eq:direction_is_e_1} and \eqref{eq:partition}, for each $i\in \{1,\ldots,M\}$ we can pick a sequence $\{s_{i,n}\}_{n\in\mathbf{N}}\subset[0,L]$ such that (as in Figure~\ref{fig:2D-quasi})
\begin{align}\label{eq:partition_n}
\gamma_n'(s_{i,n})=e_1, \quad \lim_{n\to\infty} s_{i,n}=s_{i}.
\end{align}
Taking $s_{0,n}:=0$ and $s_{M+1,n}:=L$, for all large $n$ we can define a new partition $\{s_{i,n}\}_{i=0}^{M+1}$ of $[0,L]$.
Define $\Gamma_n:[0,L+M]\to\R^2$ by
\[
\Gamma_n:= \bigg(\bigoplus_{i=1}^M\big( \gamma_n|_{[s_{i-1,n},s_{i,n}]} \oplus \bar{\gamma} \big)\bigg)\oplus \gamma_n|_{[s_{M,n},s_{M+1}]}, 
\]
where $\bar{\gamma}:[0,1]\to\R^2$ is the segment $\bar{\gamma}(s)=s e_1$ (see Figure~\ref{fig:2D-quasi}).
From \eqref{eq:partition_n} we see that $\Gamma_n$ is of class $C^1$.
Combining this with the fact that each $\gamma_n|_{[s_{i-1,n}, s_{i,n}]}$ and $\bar{\gamma}$ are of class $W^{2,p}$, 
we see that $\Gamma_n$ is of class $W^{2,p}$. 
Therefore, we have $\Gamma_n \in \Ap(P_0, P_1', L+M)$ with $P_1':=P_1+Me_1$. 
In addition, since $\bar{\gamma}''\equiv0$,
\begin{align}\notag 
    \B_p[\Gamma_n] = \|\Gamma_n''\|_{L^p(0,L+M)} = \|\gamma_n''\|_{L^p(0,L)}= \B_p[\gamma_n].
\end{align}
This together with \eqref{eq:energy_gamma_n} yields 
\begin{align}\label{eq:energy_Gamma_n}
   \B_p[\Gamma_n] <\B_p[\gamma].
\end{align}
Furthermore, by construction it is easy to check that 
\begin{align}\label{eq:Gamma_n_to_Gamma}
   \Gamma_n \to \Gamma \quad \text{in}\ \  W^{2,p}(0,L+M;\mathbf{R}^2),
\end{align}
where
\[
\Gamma:= \bigg(\bigoplus_{i=1}^M\big( \gamma|_{[s_{i-1},s_{i}]} \oplus \bar{\gamma} \big)\bigg)\oplus \gamma|_{[s_{M},s_{M+1}]}.
\]
By construction, $\Gamma$ is an alternating flat-core $p$-elastica in $\Ap(P_0, P_1', L+M)$, and satisfies $\B_p[\Gamma]=\B_p[\gamma]$.  
Thus by Proposition~\ref{prop:alternating} it follows that $\Gamma$ is a local minimizer of $\B_p$ in $\Ap(P_0,P_1',L+M)$.
This together with \eqref{eq:Gamma_n_to_Gamma} yields $\B_p[\Gamma_n]\geq \B_p[\Gamma]=\B_p[\gamma]$ for all (large) $n\in\mathbf{N}$, which contradicts \eqref{eq:energy_Gamma_n}.
\end{proof}

\begin{figure}[htbp]
\centering
\includegraphics[width=100mm]{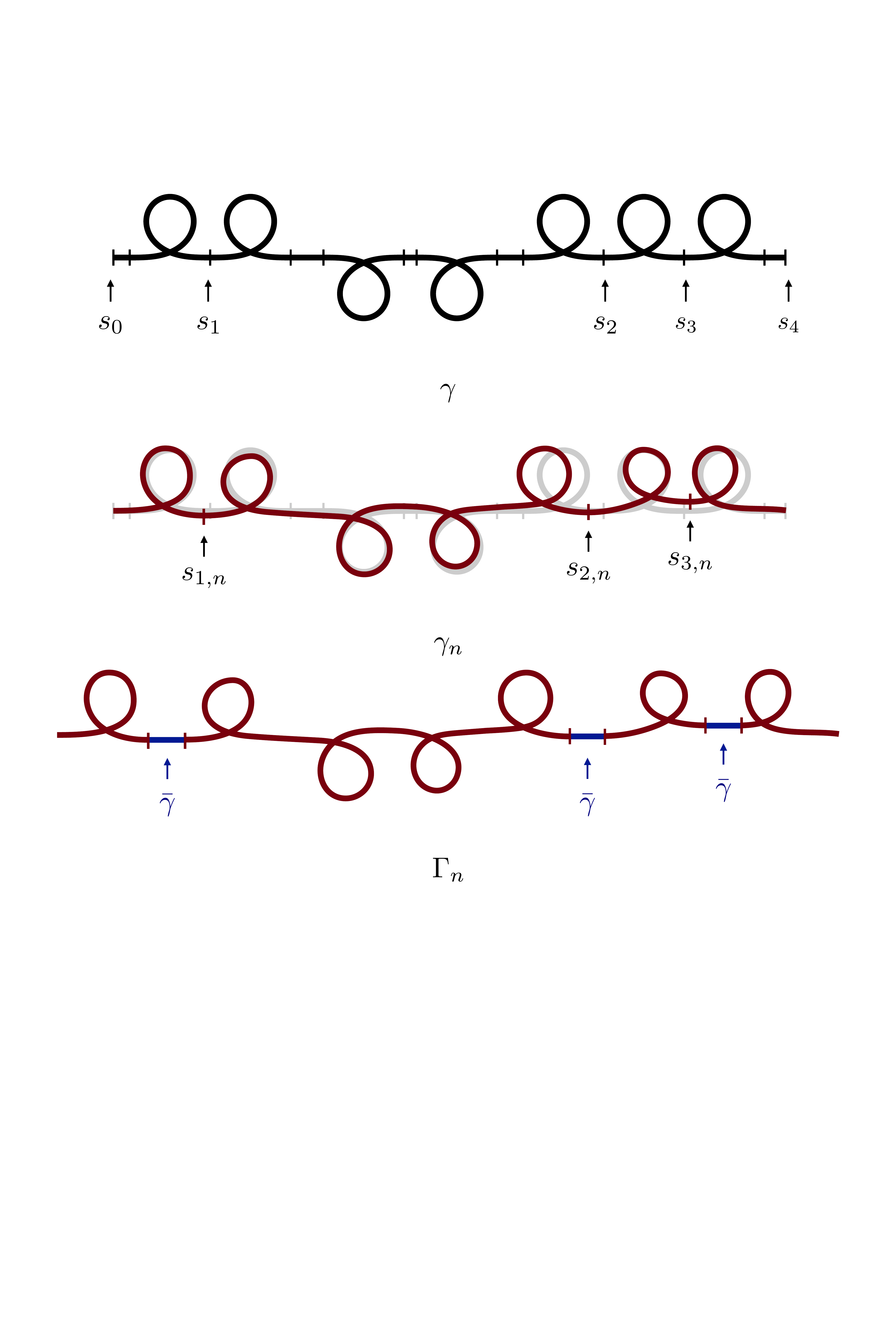} 
\caption{A quasi-alternating flat-core pinned $p$-elastica and its partition $\{s_i\}_{i=0}^3$ (top).
A perturbation $\gamma_n$ of $\gamma$ and its partition $\{s_{i,n}\}_{i=1}^3$ (middle).
A construction of $\Gamma_n$ by inserting line segments $\bar{\gamma}$ at the partition points (bottom). 
}
\label{fig:2D-quasi}
\end{figure}

\if0
\begin{proof}[Memo: proof of \eqref{eq:Gamma_n_to_Gamma}]
Note that since $\gamma_n \to \gamma$ in $W^{2,p}(0,L;\mathbf{R}^2)$, we have $k_n\to k$ in $L^p(0,L)$.
Let $\kappa$ and $\kappa_n$ respectively denote the signed curvature of $\Gamma$ and $\Gamma_n$. 
Define $A_i:=(s_i+i,s_{i+1}+i)$ for $i\in \{0,\ldots, M\}$ and $Z_i:=[s_i+(i-1), s_i+i]$ for $i\in \{1,\ldots, M\}$. 
Then, $\Gamma$ coincides with $\gamma$ (resp.\ $\bar{\gamma}$) on $A_i$ (resp.\ $Z_i$), and we find that 
\begin{align*}
\kappa(s)=\begin{cases}
k(s-i) \  &\text{if \ } s\in A_i \text{ \ for some $i$}, \\
0 &\text{otherwise}.
\end{cases} 
\end{align*}
Similarly, set $A_{i,n}:=(s_{i,n}+i,s_{i+1,n}+i)$ for $i\in \{0,\ldots, M\}$ and $Z_{i,n}:=[s_{i,n}+(i-1), s_{i,n}+i]$ for $i\in \{1,\ldots, M\}$. Then, 
\begin{align*}
\kappa_n(s)=\begin{cases}
k_n(s-i) \  &\text{if \ } s\in A_{i,n} \text{ \ for some $i$}, \\
0 &\text{otherwise}. 
\end{cases} 
\end{align*}
We also define $Z_0=Z_{M+1}=Z_{0,n}=Z_{M+1,n}:=\emptyset$.
Since $\lim_{n\to\infty}s_{i,n}= s_i$ as in \eqref{eq:partition_n}, for sufficiently large $n$ we have 
\begin{align}
\begin{split}\label{eq:set_A_i}
    A_{i}&\subset Z_{i,n} \cup A_{i,n} \cup Z_{i+1,n} \quad \text{for each} \ \  i\in \{0,\ldots, M\}, \\
    Z_{i}&\subset A_{i-1,n} \cup Z_{i,n} \cup A_{i,n} \quad \text{for each} \ \  i\in \{1,\ldots, M\}.
\end{split}
\end{align}
In order to show \eqref{eq:Gamma_n_to_Gamma} it suffices to prove that $\kappa_n \to \kappa$ in $L^p(0,L+M)$. 
Hence we consider 
\begin{align}\label{eq:kappa-kappa_n}
    \int_{0}^{L+M}|\kappa-\kappa_n|^p\,ds = \sum_{i=0}^M\int_{A_i}|\kappa-\kappa_n|^p\,ds + \sum_{i=1}^M\int_{Z_i}|\kappa-\kappa_n|^p\,ds.
\end{align}
In view of \eqref{eq:set_A_i}, for each $i\in\{0,\ldots, M\}$ we have 
\begin{align*}
   \int_{A_i}|\kappa-\kappa_n|^p\,ds &=\int_{A_i \cap A_{i,n}}|\kappa-\kappa_n|^p\,ds + \int_{A_i \cap (Z_{i,n}\cup Z_{i+1,n})}|\kappa-\kappa_n|^p\,ds.  
\end{align*}
Then, recalling that $\gamma_n\to\gamma$ in $W^{2,p}$, we obtain
\begin{align*}
\int_{A_i \cap A_{i,n}}|\kappa-\kappa_n|^p\,ds &=
\int_{(s_i,s_{i+1}) \cap (s_{i,n},s_{i+1,n})} |k-k_n|^p\,ds 
\leq \int_0^L |k-k_n|^p\,ds \to 0 
\end{align*}
as $n\to \infty$.
On the other hand, since $|A_i \cap (Z_{i,n}\cup Z_{i+1,n})|\to0$ holds by \eqref{eq:partition_n}, we have 
\begin{align*}
\int_{A_i \cap (Z_{i,n}\cup Z_{i+1,n})}|\kappa-\kappa_n|^p\,ds =  \int_{A_i \cap (Z_{i,n}\cup Z_{i+1,n})}|\kappa|^p\,ds \to 0 
\end{align*}
Thus it follows that  
\[
\lim_{n\to\infty}\int_{A_i}|\kappa-\kappa_n|^p\,ds = 0 \quad \text{for each }\ i\in\{0,\ldots, M\}.
\]
Following the same argument, we deduce from \eqref{eq:set_A_i} that for each $i\in\{1,\ldots, M\}$
\[
\int_{Z_i}|\kappa-\kappa_n|^p\,ds \leq \int_{Z_{i} \cap Z_{i,n} }|\kappa-\kappa_n|^p\,ds 
+\int_{Z_{i} \cap (A_{i-1,n} \cup A_{i,n})}|\kappa-\kappa_n|^p\,ds \to0
\]
as $n\to\infty$.
Consequently by \eqref{eq:kappa-kappa_n} it follows that $\kappa_n \to \kappa$ in $L^p(0,L+M)$.
\end{proof}
\fi

\begin{proof}[Proof of Corollary~\ref{cor:2D-stability}]
It follows from Theorem~\ref{thm:2D-quasi} and \cite{MY_Crelle}*{Corollary 2.14}. 
\end{proof}

\section{The instability of quasi-alternating flat-core in higher dimensions}\label{sect:3D}

In this section we discuss the non-planar case, so assume $d\geq3$ throughout. 

To begin with, 
recall from \cite{GMarXiv2501}*{Theorem 1.3} that the any pinned $p$-elastica is of class $C^2$. 
Therefore, we can directly extend (the contrapositive of) a necessary condition for the local minimality of pinned planar $p$-elasticae \cite[Lemma 4.2]{MY_Crelle} from the planar to non-planar case as follows. 

\begin{lemma}\label{lem:Crelle-3D}
Let $\gamma \in\Ap$ and suppose that there exists a sequence $\{\gamma_n\}_{n\in\N} \subset \Ap$ satisfying the following three conditions: 
\begin{align}\tag{C}\label{eq:C}
\begin{cases}
\mathrm{(i)} \ \ \ \ \gamma_n \to \gamma  & \text{in} \ \ W^{2,p}(0,L;\R^d) \ \  \text{as} \ \  n\to \infty, \\
\mathrm{(ii)} \ \ \ \, \mathcal{B}_p[\gamma_n] \leq \mathcal{B}_p[\gamma] &  \text{for all (large)}\quad n\in \N, \\
\mathrm{(iii)} \ \ \, \gamma_n \notin C^2(0,L;\R^d) &\text{for all (large)} \quad n\in \N.
\end{cases}
\end{align}
Then, $\gamma$ is unstable in $\Ap$.
\end{lemma}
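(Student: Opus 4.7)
The plan is to argue by contrapositive: assume that $\gamma$ is a local minimizer of $\B_p$ in $\Ap$, so there exists a radius $r>0$ such that $\B_p[\tilde\gamma]\geq\B_p[\gamma]$ for every $\tilde\gamma\in\Ap$ with $\|\tilde\gamma-\gamma\|_{W^{2,p}}<r$; under this assumption I will derive a contradiction with hypothesis \eqref{eq:C}(iii).

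First, combining \eqref{eq:C}(i) and \eqref{eq:C}(ii), for all sufficiently large $n$ the curve $\gamma_n$ lies in the $W^{2,p}$-ball of radius $r/2$ about $\gamma$ and satisfies the sandwich $\B_p[\gamma]\leq\B_p[\gamma_n]\leq\B_p[\gamma]$, forcing equality $\B_p[\gamma_n]=\B_p[\gamma]$. Thus each such $\gamma_n$ attains the infimum of $\B_p$ on $\{\tilde\gamma\in\Ap:\|\tilde\gamma-\gamma\|_{W^{2,p}}<r\}$. Since $\gamma_n$ lies in the interior of this ball, the smaller ball $\{\tilde\gamma\in\Ap:\|\tilde\gamma-\gamma_n\|_{W^{2,p}}<r/2\}$ is contained in it, which upgrades $\gamma_n$ itself to a local minimizer of $\B_p$ in $\Ap$ for every large $n$.

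Second, I would invoke the standard Lagrange multiplier argument recalled in Section~\ref{subsect:p-elastica}: any local minimizer of $\B_p$ in $\Ap$ satisfies \eqref{eq:def-pinned_p-elastica} for some $\lambda\in\R$ and hence is a pinned $p$-elastica. At this point the only non-trivial external input enters, namely the higher-dimensional regularity theorem \cite{GMarXiv2501}*{Theorem 1.3}, which asserts that every pinned $p$-elastica is of class $C^2$. Applying this to each $\gamma_n$ yields $\gamma_n\in C^2(0,L;\R^d)$ for all large $n$, contradicting \eqref{eq:C}(iii).

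The proof is essentially a one-line bootstrap once the right regularity result is in place; the only potential obstacle is precisely the availability of $C^2$-regularity of pinned $p$-elasticae in dimension $d\geq 3$, which in the planar case was part of \cite{MY_Crelle} and which has just been extended to all $d\geq2$ by \cite{GMarXiv2501}*{Theorem 1.3}. With that ingredient cited, the planar argument of \cite[Lemma 4.2]{MY_Crelle} transfers verbatim.
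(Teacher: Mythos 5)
Your proposal is correct and follows exactly the route the paper intends: the paper omits the proof of Lemma~\ref{lem:Crelle-3D} as ``completely parallel'' to \cite[Lemma 4.2]{MY_Crelle}, whose argument is precisely your contrapositive bootstrap (local minimality of $\gamma$ forces each $\gamma_n$ to be a local minimizer with equal energy, hence a pinned $p$-elastica by the multiplier method, hence $C^2$ by \cite{GMarXiv2501}*{Theorem 1.3}, contradicting \eqref{eq:C}(iii)). You also correctly identified the $C^2$-regularity in $d\geq3$ as the one new ingredient needed to transfer the planar argument, which is exactly what the paper highlights before stating the lemma.
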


We safely omit the proof of Lemma~\ref{lem:Crelle-3D} as it is completely parallel to the argument for \cite[Lemma 4.2]{MY_Crelle}.

This kind of criterion is very useful in terms of the stability of non-planar flat-core pinned $p$-elasticae. Indeed, here we construct a new (non-planar) perturbation to obtain the following

\begin{proposition}\label{prop:instability-quasi-3D}
Let $d\geq3$ and $\gamma\in \Ap$ be a flat-core pinned $p$-elastica. 
Suppose that, up to similarity and reparametrization, $\gamma$ is of the form \eqref{eq:N-loop-flat-core} with $L_1>0$.
Then $\gamma$ is unstable in $\Ap$.
\end{proposition}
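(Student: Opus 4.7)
The plan is to apply the criterion of Lemma~\ref{lem:Crelle-3D}: I will construct a sequence $\{\gamma_n\}_n \subset \Ap$ satisfying \eqref{eq:C}, and conclude that $\gamma$ is unstable. After a similarity and reparametrization, I may assume $\gamma(0)=0$ and $\gamma(L)=-\ell e_1$ with $\gamma$ in the form \eqref{eq:N-loop-flat-core}, and---using the freedom $d\geq 3$ to rotate within $\Span\{e_2,\ldots,e_d\}$---I further reduce to $\sigma_1=e_2$. The central idea is to introduce a small non-planar \emph{twist} inside the first loop and to absorb the resulting geometric discrepancy using the segment $\gamma_\ell^{L_1}$, which is available precisely because $L_1>0$ and $d\geq 3$.

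Concretely, for each $n$ I fix $\alpha_n\in(0,1)$ with $\alpha_n\to 0$ and let $R_{\alpha_n}$ denote rotation by $\alpha_n$ around the $e_1$-axis in the $e_2$--$e_3$ plane (nontrivial exactly because $d\geq 3$). I replace the second half $\gamma_b^{\sigma_1}|_{[0,\K_p(1)]}$ of the first loop by $\gamma_b^{R_{\alpha_n}\sigma_1}|_{[0,\K_p(1)]}$, translated so that its start meets the top of the unchanged first half $\gamma_b^{\sigma_1}|_{[-\K_p(1),0]}$. By \eqref{eq:loop_tangent_top} the tangent $e_1$ at the top is fixed by $R_{\alpha_n}$, so the join is $C^1$; by \eqref{eq:loop_curvature_vector} the curvature vector jumps from $-2\sigma_1$ to $-2R_{\alpha_n}\sigma_1$, so $\gamma_n\notin C^2$ there. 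The translation shifts the end of the modified loop by $u_n:=\frac{p}{p-1}(\sigma_1-R_{\alpha_n}\sigma_1)=O(\alpha_n)$ relative to $\gamma$. To stay in $\Ap(P_0,P_1,L)$, I replace $\gamma_\ell^{L_1}$ by a straight segment from $P_0$ to $-L_1e_1-u_n$, which lies in $\Span\{e_1,u_n\}$ and hence still contributes zero bending energy; when $N\geq 2$ a simpler alternative is to perform an opposite twist on a second loop so that the two shifts $u_n$ cancel, in which case no segment modification is needed at all.

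Each modification is either a rigid motion of part of the curve (preserving its bending energy) or a straight rearrangement (contributing none), so the construction should give $\B_p[\gamma_n]=\B_p[\gamma]$, while $\gamma_n\to\gamma$ in $W^{2,p}$ is immediate from $\alpha_n\to 0$ and from a routine convergence check of the kind already carried out in the proof of Theorem~\ref{thm:2D-quasi}. Lemma~\ref{lem:Crelle-3D} then yields the instability of $\gamma$. I expect the main obstacle to be the precise geometric bookkeeping needed to enforce both $\gamma_n(L)=P_1$ and the length constraint $\mathcal{L}[\gamma_n]=L$ simultaneously and exactly, since the tilted first segment introduces a second-order excess of length $O(\alpha_n^2)$ that must be compensated without introducing new bending energy. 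The most delicate subcase is $N=1$ with $L_{N+1}=0$, where $L_1$ is the only positive segment length and no helper loop is available; there one must use a symmetric twist of the two halves of $\gamma_b^{\sigma_1}$ (by $\pm\alpha_n$), which produces a shift purely in the $e_3$ direction, and absorb it by tilting the planar $\gamma_\ell^{L_1}$ out of its original plane combined with a small repositioning of the loop along the tilted segment to match the length constraint exactly.
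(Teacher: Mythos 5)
Your opening move is exactly the right one, and in fact coincides with the paper's: twisting at the vertex of the first loop exploits that the tangent there is $e_1$ (\eqref{eq:loop_tangent_top}), hence fixed by any rotation about the $e_1$-axis, while the curvature vector jumps from $-2\sigma_1$ to $-2R_{\alpha_n}\sigma_1$ by \eqref{eq:loop_curvature_vector}, which is precisely what feeds condition (iii) of \eqref{eq:C} in Lemma~\ref{lem:Crelle-3D}. (Your ``replace the second half-loop and translate the tail'' even places the endpoint at the same point $P_1+u_n$ as the paper's rigid rotation of $\gamma|_{[s_0,L]}$ about the line through the vertex parallel to $e_1$.) The gap is in how you restore admissibility, and it is genuine. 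First, replacing $\gamma_\ell^{L_1}$ by the straight segment from $P_0$ to $-L_1e_1-u_n$ creates a tangent discontinuity of size $O(\alpha_n)$ at the junction with the \emph{unchanged} first half-loop: by \eqref{eq:loop_tangent_ends} the loop's initial tangent is exactly $-e_1$, whereas your tilted segment arrives with tangent $-(L_1e_1+u_n)/|L_1e_1+u_n|\neq -e_1$. A curve with a corner is not in $W^{2,p}_{\rm arc}$, so these comparison curves are not in $\Ap$ at all and Lemma~\ref{lem:Crelle-3D} does not apply; rotating the half-loop to repair this corner would move the corner to the vertex instead. Second, the $O(\alpha_n^2)$ length excess you flag is not a residual technicality but fatal as it stands, since $\Ap$ fixes the length exactly, and your proposal offers no mechanism to compensate it. Third, the $N\geq2$ shortcut also fails exactly: with opposite twists the two shifts are of the form $r(1-\cos\alpha_n)e_2\mp r\sin\alpha_n e_3$ with $r=\tfrac{p}{p-1}$, so their sum is $2r(1-\cos\alpha_n)e_2\neq 0$, leaving an $O(\alpha_n^2)$ endpoint mismatch.

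What is missing is the paper's exact bookkeeping device, which would work verbatim on your configuration: keep the initial segment straight along $-e_1$, and instead \emph{cut} its first piece of length $\delta_n:=\ell-\sqrt{\ell^2-|u_n|^2}$, where $|u_n|^2=2r^2(1-\cos\alpha_n)$ and $\ell=|P_0-P_1|$; a direct computation shows the chord between the new starting point $-\delta_ne_1$ and the displaced endpoint then has length exactly $\ell$. (This cut is the only place where the hypothesis $L_1>0$ is actually used, to ensure $\delta_n<L_1$ for large $n$.) The lost length $\delta_n$ is then reinserted as two straight segments of length $\delta_n/2$ attached \emph{tangentially}, one at the start (tangent $-e_1$) and one at the vertex (tangent $+e_1$ by \eqref{eq:loop_tangent_top}), so the curve stays in $W^{2,p}$, acquires no new bending energy, and has length exactly $L$; after a rigid motion matching the endpoints to $P_0,P_1$ one obtains a genuine sequence in $\Ap$ satisfying all of \eqref{eq:C}, the curvature jump at the vertex surviving to give (iii). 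So your twist is the correct first move, but without this cut-and-insert correction (or an equally exact substitute) the constructed perturbations never lie in the admissible class, and the argument does not close.
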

\begin{proof}
Without loss of generality we may assume that $\gamma$ is exactly given by \eqref{eq:N-loop-flat-core} with $L_1>0$.
Up to rotation we may assume that $\sigma_1=e_2$. 

Let $s_0:=L_1 + \K_p(1)$, corresponding to the vertex of the first loop, and consider the decomposition $\gamma=\gamma|_{[0,s_0]} \oplus \gamma|_{[s_0,L]}$.
Then by \eqref{eq:loop_tangent_top} we have $\gamma'(s_0)=(\gamma_b^{\sigma_1})'(0)=e_1$. 
We write $\langle\gamma(s_0), e_2 \rangle = p/(p-1) =:r$. 
Let $\phi_n:=1/n$ and $R_{\phi_n}\gamma|_{[s_0,L]}$, where $R_{\phi_n}$ denotes  the rotation matrix through angle $\phi_n$ about the line that passes through $\gamma(s_0)$ and is parallel to $e_1$: more precisely, 
\begin{align}\label{eq:rotation}
R_{\phi_n}\gamma|_{[s_0,L]}(s):= \gamma(s_0) + A_{\phi_n} \big( \gamma(s)-\gamma(s_0) \big), \quad s\in[s_0,L], 
\end{align}
where $A_{\phi_n} \in O(d)$ is the orthogonal matrix satisfying 
\begin{align*}
&(A_{\phi_n})_{ii}=1 \quad (i\neq2,3),  \hspace{31pt} (A_{\phi_n})_{23}=-\sin\phi_n, \quad (A_{\phi_n})_{32}= \sin\phi_n, \\
&(A_{\phi_n})_{22} =(A_{\phi_n})_{33} = \cos{\phi_n},  \quad (A_{\phi_n})_{ij}=0 \ (\text{otherwise}). 
\end{align*}

Now we define the curve $\Gamma_n\in C^1([0,L];\R^d)$ by 
\[
\Gamma_n:= \gamma|_{[0,s_0]} \oplus \big( R_{\phi_n}\gamma|_{[s_0,L]} \big)
\]
(see Figure~\ref{fig:3D-quasi} (ii)).
It follows from \eqref{eq:rotation} that 
\begin{align}\label{eq:end_Gamma_n}
\Gamma_n(L)= (-\ell, r(1-\cos\phi_n), -r\sin\phi_n, 0, \ldots, 0). 
\end{align}
Define the convergent sequence $\delta_n\to0$ by
\begin{align}\label{eq:delta_n}
\delta_n:=\ell - \sqrt{\ell^2-2r^2(1-\cos{\phi_n})} >0.
\end{align} 
Then there exists a large $n_0\in\N$ such that for all $n\geq n_0$ we have $\Gamma_n(\delta_n)=\gamma|_{[0,s_0]}(\delta_n)=-\delta_n e_1$, which combined with \eqref{eq:delta_n} and \eqref{eq:end_Gamma_n} yields
\[
|\Gamma_n(\delta_n) - \Gamma_n(L)| =\ell.
\]
Hence the translated restriction $P_0 \oplus \Gamma_n|_{[\delta_n,L]}$ 
satisfies the same boundary condition as $\gamma$, but the length is shorter since $\mathcal{L}[\Gamma_n|_{[\delta_n,L]}] = L-\delta_n$ (see Figure~\ref{fig:3D-quasi} (iii)).

In what follows we further modify $\{\Gamma_n|_{[\delta_n,L]}\}_{n\geq n_0}$ to make the length to be $L$, while preserving the endpoints.
Let $\alpha^\pm_n:[0,\frac{\delta_n}{2}]\to\R^d$ be the segments of length $\frac{\delta_n}{2}$ defined by $\alpha^\pm_n(s):= \pm s e_1$. 
For each $n\geq n_0$ we define $\gamma_n$ by 
\[
\gamma_n := \big( \alpha^-_n|_{[0,\frac{\delta_n}{2}]} \big) \oplus \big( \Gamma_n|_{[\delta_n,s_0]} \big) \oplus \big( \alpha^+_n|_{[0,\frac{\delta_n}{2}]} \big) \oplus \big( \Gamma_n|_{[s_0,L]} \big)
\]
(see Figure~\ref{fig:3D-quasi} (iv)). 
It then follows that $\mathcal{L}[\gamma_n]=L$ and $|\gamma_n(L)-\gamma_n(0)|=|\Gamma_n(L)-\Gamma_n(\delta_n)|=\ell$.
Noting also that $\gamma_n\in W^{2,p}(0,L;\R^d)$, we find that the translated curve $P_0 \oplus \gamma_n$ ($= \gamma_n+\delta_ne_1$) lies in $\Ap$ for all $n\geq n_0$.
Therefore, in view of Lemma~\ref{lem:Crelle-3D}, the proof is reduced to showing that $\{\gamma_n\}_{n\geq n_0}$ satisfies \eqref{eq:C}.

It is easy to verify the convergence \eqref{eq:C}-(i) as all the above procedures are $W^{2,p}$-small. 
Also, our construction is based only on cut-and-paste procedures together with deleting/inserting segments, the $p$-bending energy is preserved so that $\B_p[\gamma_n]=\B_p[\gamma]$, yielding \eqref{eq:C}-(ii). 
Finally, \eqref{eq:C}-(iii) follows from the fact that the vertex of the loop has non-vanishing curvature by \eqref{eq:loop_curvature_vector}, resulting the jump of the normal direction due to the rotation by $A_{\phi_n}$.
The proof is thus complete.
\end{proof}

\begin{figure}[htbp]
\centering
\includegraphics[width=90mm]{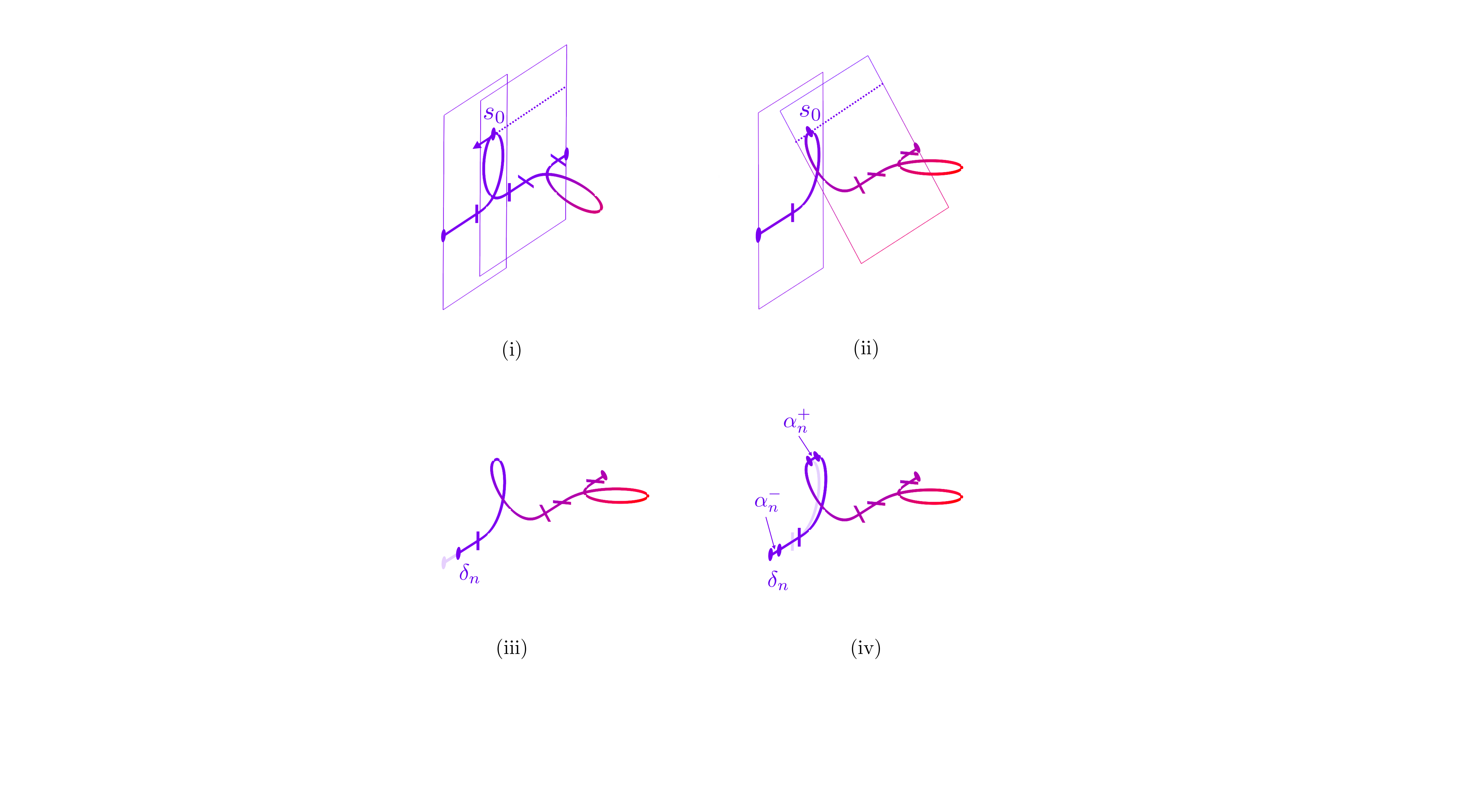} 
\caption{(i) A non-planar flat-core pinned $p$-elastica. 
(ii) Partly rotate at the vertex of the first loop. 
(iii) Cut the first $\delta_n$-segment to adjust the distance of the endpoints.
(iv) Insert two line segments $\alpha_n^\pm$ to adjust the total length without moving the endpoints.
}
\label{fig:3D-quasi}
\end{figure}

Now we turn to the case of $L_1=0$. 
Recall from \cite[Theorem 1.6]{GMarXiv2501} that if $\gamma:[0,L]\to\R^d$ is a pinned $p$-elastica, then the scalar curvature $k$ of $\gamma$ must satisfy $k(0)=k(L)=0$. 
Therefore, in the same spirit as Lemma~\ref{lem:Crelle-3D}, it suffices to construct a sequence $\{\gamma_n\}_{n\in\N} \subset \Ap$ satisfying the following three conditions: 
\begin{align}\tag{C${}_{\rm p}$}\label{eq:Cp}
\begin{cases}
\mathrm{(i)} \ \ \ \ \gamma_n \to \gamma \quad  \text{ in } \ \ W^{2,p}(0,L;\R^d) \ \  & \text{as} \ \  n\to \infty, \\
\mathrm{(ii)} \ \ \ \, \mathcal{B}_p[\gamma_n] \leq \mathcal{B}_p[\gamma] &  \text{for all (large)}\ n\in \N, \\
\mathrm{(iii)} \ \ \, \text{$k_n$ satisfies $k_n(0)\neq0$ or $k_n(L)\neq0$} & \text{for all (large)}\ n\in \N,
\end{cases}
\end{align}
where $k_n$ denotes the scalar curvature of $\gamma_n$.
To be more precise, condition (iii) in \eqref{eq:Cp} should be checked only if $\gamma_n$ is of class $C^2$: otherwise we may interpret that it is automatically valid, cf.\ \eqref{eq:C}-(iii).

\begin{proposition}\label{prop:instability-NOTquasi-3D}
Let $d\geq3$ and $\gamma\in \Ap$ be a flat-core pinned $p$-elastica. 
Suppose that, up to similarity and reparametrization, $\gamma$ is of the form \eqref{eq:N-loop-flat-core} with $L_1=0$.
Then $\gamma$ is unstable in $\Ap$.
\end{proposition}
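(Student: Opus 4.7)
The plan is to adapt the cut-and-paste construction of Proposition~\ref{prop:instability-quasi-3D} to the case $L_1=0$, in order to apply the criterion \eqref{eq:Cp}. Without loss of generality I assume $\gamma$ has the form \eqref{eq:N-loop-flat-core} exactly, with $L_1=0$ and $\sigma_1=e_2$, so that the first loop $\gamma_b^{e_2}$ touches $P_0=0$. The goal is to construct a sequence $\{\gamma_n\}\subset\Ap$ with $\gamma_n\to\gamma$ in $W^{2,p}$, $\B_p[\gamma_n]\leq\B_p[\gamma]$, and $\gamma_n\notin C^2$.

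Following Proposition~\ref{prop:instability-quasi-3D}, I rotate the portion $\gamma|_{[s_0,L]}$ about the $e_1$-axis through $\gamma(s_0)$ by a small angle $\phi_n=1/n$, where $s_0=\K_p(1)$ is the vertex of the first loop. By \eqref{eq:loop_tangent_top} we have $\gamma'(s_0)=e_1$, so the rotation preserves $C^1$ regularity at $s_0$; by \eqref{eq:loop_curvature_vector} we have $\gamma''(s_0)=-2e_2\neq0$, and this curvature vector is genuinely rotated, producing a $C^2$ break at $s_0$. Writing $\rho:=p/(p-1)$ for the height of the vertex above the $e_1$-axis, the endpoint of the rotated curve $\Gamma_n$ lies at $\Gamma_n(L)=(-\ell,\rho(1-\cos\phi_n),-\rho\sin\phi_n,0,\ldots,0)$, slightly off the $e_1$-axis.

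To restore the prescribed endpoint distance $\ell$ and total length $L$, I cut a piece of length $\delta_n=\ell-\sqrt{\ell^2-2\rho^2(1-\cos\phi_n)}$ from some line segment $\gamma_\ell^{L_j}$ (with $L_j>0$ and $j\in\{2,\ldots,N+1\}$) contained in the rotated part of $\Gamma_n$. Since rotations about the $e_1$-axis fix $e_1$, such rotated segments still run along $-e_1$, so the cut shortens the endpoint-to-endpoint displacement by $\delta_n e_1$ and the endpoints end up at distance $\ell$. I then insert two opposite segments $\alpha_n^\pm$ of length $\delta_n/2$, pointing along $-e_1$ at $s=0$ and along $+e_1$ just past $s_0$, to restore $\mathcal{L}=L$; the tangent conditions are respected because the curve already has tangent $-e_1$ at $s=0$ and $+e_1$ at $s_0$. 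A final rigid motion then places the perturbed curve inside $\Ap(P_0,P_1,L)$. The verification of \eqref{eq:Cp} proceeds as in Proposition~\ref{prop:instability-quasi-3D}: $\B_p$ is preserved because segment insertion and removal contribute zero bending energy, the perturbation is $W^{2,p}$-small as $\phi_n,\delta_n\to 0$, and the $C^2$ break at $s_0$ persists.

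The main obstacle is the degenerate sub-case $|P_0-P_1|=L/(p-1)$, in which all $L_j=0$ and the curve is a pure concatenation of loops with no segment available to cut for the length/endpoint adjustment. In this sub-case the construction must be modified along the lines of the planar argument \cite{MY_Crelle}*{Proposition 6.4}, which performs a finer cut-and-paste within the loop structure itself; the extension of that planar argument to $d\geq3$ is direct because the required perturbation can be carried out entirely within the 2-plane containing the first loop.
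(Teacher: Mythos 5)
Your proof is essentially correct, but it takes a genuinely different and considerably heavier route than the paper, and your fallback sub-case is in fact the paper's entire proof. The paper treats all $L_1=0$ configurations at once by a single cyclic shift, $\gamma_n := P_0\oplus\gamma|_{[\frac1n,L]}\oplus\gamma|_{[0,\frac1n]}$ (see \eqref{eq:pinned-perturb}): since $\gamma'(0)=\gamma'(L)=-e_1$ and the curvature vectors vanish at both ends of $\gamma$, each $\gamma_n$ lies in $\Ap$ and is even of class $C^2$; the energy is preserved by rearrangement, $\gamma_n\to\gamma$ in $W^{2,p}$, and---precisely because $L_1=0$ places a loop at the endpoint---$k_n(0)=k(\frac1n)=2\sech_p(\frac1n-\K_p(1))\neq0$, violating the natural boundary condition $k(0)=k(L)=0$ from \cite{GMarXiv2501} and triggering \eqref{eq:Cp}. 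This needs no rotation, no length/endpoint bookkeeping, no straight segment to cut, and hence no case distinction. Your rotate--cut--insert construction for the case where some $L_j>0$ with $j\geq2$ exists is sound: the displacement accounting ($+\delta_n e_1$ from the collinear cut, net zero from $\alpha_n^\pm$) checks out, the segment inserted at the vertex (where the curvature vector is $-2\sigma_1\neq0$) already makes $\gamma_n\notin C^2$ so that Lemma~\ref{lem:Crelle-3D} applies (your framing via \eqref{eq:Cp} is harmless, as the paper notes that its condition (iii) is automatic for non-$C^2$ competitors), and your explicit closing rigid motion is actually more careful than the analogous step in Proposition~\ref{prop:instability-quasi-3D}, where the post-cut displacement vector is not parallel to $e_1$ and a small rotation tending to the identity is tacitly required, not just a translation. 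The weakness is structural rather than logical: your construction cannot handle the sub-case $|P_0-P_1|=\frac{1}{p-1}L$ (all $L_j=0$), and the tool you invoke there---the extension of \cite{MY_Crelle}*{Proposition 6.4} to $\R^d$---is exactly the cyclic shift above, which already disposes of every $L_1=0$ case including yours, making the rotation construction superfluous here; moreover your description of that fallback (``a finer cut-and-paste \dots within the 2-plane containing the first loop'') is loose, since the actual mechanism transports a short initial arc of the first loop to the far end of the curve and exploits the endpoint-curvature necessary condition, so you should verify you can run that argument, because your proof ultimately rests on it.
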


The proof is completely parallel to the planar case \cite{MY_Crelle}.
Here we briefly sketch the argument for the reader's convenience.

\begin{proof}[Proof of Proposition~\ref{prop:instability-NOTquasi-3D}]
Without loss of generality we may assume that $\gamma$ is exactly given by \eqref{eq:N-loop-flat-core} with $L_1=0$.
Suppose on the contrary that $\gamma$ is stable.
For each integer $n\geq n_0$ with some $n_0$ such that $\frac{1}{n_0}<L$, we define $\gamma_n:[0,L]\to\R^d$ by
\begin{align}\label{eq:pinned-perturb}
\gamma_n:= P_0 \oplus \gamma|_{[\frac{1}{n},L]} \oplus \gamma|_{[0,\frac{1}{n}]}.     
\end{align}
Then $\{\gamma_n\}_{n\geq n_0} \subset \Ap$, but satisfies all the conditions in \eqref{eq:Cp}.
In particular, each $\gamma_n$ is of class $C^2$ and has the same energy as $\gamma$ but the vanishing-curvature boundary condition is lost.
See \cite[Proposition 6.4]{MY_Crelle} for details.
\end{proof}

\begin{proof}[Proof of Theorem~\ref{thm:nD-stability}]
It directly follows from Propositions~\ref{prop:instability-quasi-3D} and \ref{prop:instability-NOTquasi-3D}.
\end{proof}

\begin{proof}[Proof of Corollary~\ref{cor:nD-stable-unique}]
It follows from \cite{GMarXiv2501}*{Theorem 1.6} combined with Corollary~\ref{cor:2D-stability} and Theorem~\ref{thm:nD-stability}.
\end{proof}


\bibliography{ref_Miura-Yoshizawa-ver250414}

@article {MY_JLMS,
    AUTHOR = {Miura, Tatsuya and Yoshizawa, Kensuke},
     TITLE = {Variational stabilization of degenerate {$p$}-elasticae},
   JOURNAL = {J. Lond. Math. Soc. (2)},
  FJOURNAL = {Journal of the London Mathematical Society. Second Series},
    VOLUME = {111},
      YEAR = {2025},
    NUMBER = {3},
     PAGES = {Paper No. e70096},
      ISSN = {0024-6107,1469-7750},
   MRCLASS = {99-06},
  MRNUMBER = {4877595},
       DOI = {10.1112/jlms.70096},
       URL = {https://doi.org/10.1112/jlms.70096},
}

@article{MY_IUMJ,
	author = {Miura, Tatsuya and Yoshizawa, Kensuke},
	fjournal = {Indiana University Mathematics Journal},
	issn = {0022-2518,1943-5258},
	journal = {Indiana Univ. Math. J.},
	mrclass = {49Q10 (33E05 53A04)},
	mrnumber = {4852697},
	number = {6},
	pages = {2155--2208},
	title = {Pinned planar {$p$}-elasticae},
	volume = {73},
	year = {2024}}

@misc{GMarXiv2501,
	author = {Florian Gruen and Tatsuya Miura},
	date-added = {2025-01-27 18:02:19 +1100},
	date-modified = {2025-01-27 18:03:10 +1100},
	title = {Regularity and structure of non-planar $p$-elasticae},
	year = {arXiv:2501.07987, to appear in Math. Ann.}}

@article{MY_AMPA,
	author = {Miura, Tatsuya and Yoshizawa, Kensuke},
	doi = {10.1007/s10231-024-01445-z},
	fjournal = {Annali di Matematica Pura ed Applicata. Series IV},
	issn = {0373-3114,1618-1891},
	journal = {Ann. Mat. Pura Appl. (4)},
	mrclass = {49Q10 (33E05 53A04)},
	mrnumber = {4797294},
	number = {5},
	pages = {2319--2356},
	title = {Complete classification of planar {$p$}-elasticae},
	url = {https://doi.org/10.1007/s10231-024-01445-z},
	volume = {203},
	year = {2024},
	bdsk-url-1 = {https://doi.org/10.1007/s10231-024-01445-z}}

@article{MY_Crelle,
	author = {Miura, Tatsuya and Yoshizawa, Kensuke},
	doi = {10.1515/crelle-2024-0018},
	fjournal = {Journal f\"ur die Reine und Angewandte Mathematik. [Crelle's Journal]},
	issn = {0075-4102,1435-5345},
	journal = {J. Reine Angew. Math.},
	mrclass = {49K05 (49Q20)},
	mrnumber = {4739246},
	pages = {253--281},
	title = {General rigidity principles for stable and minimal elastic curves},
	url = {https://doi.org/10.1515/crelle-2024-0018},
	volume = {810},
	year = {2024},
	bdsk-url-1 = {https://doi.org/10.1515/crelle-2024-0018}}

@article{Miura_LiYau,
	author = {Miura, Tatsuya},
	date-added = {2024-02-16 17:43:09 +0900},
	date-modified = {2024-02-16 17:43:21 +0900},
	doi = {10.1007/s00526-023-02559-7},
	fjournal = {Calculus of Variations and Partial Differential Equations},
	issn = {0944-2669,1432-0835},
	journal = {Calc. Var. Partial Differential Equations},
	mrclass = {53A04 (49Q10 53E40)},
	mrnumber = {4631455},
	number = {8},
	pages = {Paper No. 216, 28},
	title = {Li-{Y}au type inequality for curves in any codimension},
	url = {https://doi.org/10.1007/s00526-023-02559-7},
	volume = {62},
	year = {2023},
	bdsk-url-1 = {https://doi.org/10.1007/s00526-023-02559-7}}

@article{GPT23,
	author = {Gruber, Anthony and P\'{a}mpano, \'{A}lvaro and Toda, Magdalena},
	date-added = {2024-02-16 17:37:33 +0900},
	date-modified = {2024-02-16 17:37:33 +0900},
	doi = {10.1142/S0219530523500173},
	fjournal = {Analysis and Applications},
	issn = {0219-5305,1793-6861},
	journal = {Anal. Appl. (Singap.)},
	mrclass = {53A04 (53A05 53C42)},
	mrnumber = {4663707},
	number = {6},
	pages = {1533--1559},
	title = {Instability of closed {$p$}-elastic curves in {$\Bbb{S}^2$}},
	url = {https://doi.org/10.1142/S0219530523500173},
	volume = {21},
	year = {2023},
	bdsk-url-1 = {https://doi.org/10.1142/S0219530523500173}}

@article{MN13,
	author = {Masnou, S. and Nardi, G.},
	date-added = {2023-05-23 13:36:33 +0900},
	date-modified = {2023-05-23 13:36:38 +0900},
	fjournal = {Journal of Convex Analysis},
	issn = {0944-6532},
	journal = {J. Convex Anal.},
	mrclass = {49J45 (58E30 74B20)},
	mrnumber = {3136594},
	mrreviewer = {Andrzej M. My\'{s}li\'{n}ski},
	number = {3},
	pages = {617--653},
	title = {A coarea-type formula for the relaxation of a generalized elastica functional},
	volume = {20},
	year = {2013}}

@article{BM07,
	author = {Bellettini, G. and Mugnai, L.},
	date-added = {2023-05-23 13:25:09 +0900},
	date-modified = {2023-05-23 13:25:17 +0900},
	fjournal = {Journal of Convex Analysis},
	issn = {0944-6532},
	journal = {J. Convex Anal.},
	mrclass = {49J45},
	mrnumber = {2341303},
	mrreviewer = {Michele Miranda},
	number = {3},
	pages = {543--564},
	title = {A varifolds representation of the relaxed elastica functional},
	volume = {14},
	year = {2007}}

@article{OW23,
	author = {Okabe, Shinya and Wheeler, Glen},
	date-added = {2023-04-24 15:36:59 +0900},
	date-modified = {2023-04-24 15:37:15 +0900},
	doi = {10.1016/j.matpur.2023.02.001},
	fjournal = {Journal de Math{\'e}matiques Pures et Appliqu{\'e}es. Neuvi{\`e}me S{\'e}rie},
	issn = {0021-7824},
	journal = {J. Math. Pures Appl. (9)},
	pages = {1--42},
	title = {The {{\(p\)}}-elastic flow for planar closed curves with constant parametrization},
	volume = {173},
	year = {2023},
	zbmath = {7674691},
	bdsk-url-1 = {https://doi.org/10.1016/j.matpur.2023.02.001}}

@article{Poz22,
	author = {Pozzetta, Marco},
	date-added = {2022-12-26 08:42:42 +0900},
	date-modified = {2022-12-26 08:42:46 +0900},
	doi = {10.1016/j.na.2021.112581},
	fjournal = {Nonlinear Analysis. Theory, Methods \& Applications. An International Multidisciplinary Journal},
	issn = {0362-546X},
	journal = {Nonlinear Anal.},
	mrclass = {53E40 (35R01 46N20)},
	mrnumber = {4318851},
	mrreviewer = {Rongpei Huang},
	pages = {Paper No. 112581, 53},
	title = {Convergence of elastic flows of curves into manifolds},
	url = {https://doi-org.ezproxy.lib.kyushu-u.ac.jp/10.1016/j.na.2021.112581},
	volume = {214},
	year = {2022},
	bdsk-url-1 = {https://doi-org.ezproxy.lib.kyushu-u.ac.jp/10.1016/j.na.2021.112581},
	bdsk-url-2 = {https://doi.org/10.1016/j.na.2021.112581}}

@article{BVH,
	author = {Blatt, Simon and Hopper, Christopher P. and Vorderobermeier, Nicole},
	date-added = {2022-07-04 13:48:00 +0900},
	date-modified = {2022-07-04 13:48:04 +0900},
	doi = {10.1007/s00028-022-00791-w},
	fjournal = {Journal of Evolution Equations},
	issn = {1424-3199},
	journal = {J. Evol. Equ.},
	mrclass = {53E99 (53A04)},
	mrnumber = {4416790},
	number = {2},
	pages = {Paper No. 41, 25},
	title = {A minimising movement scheme for the {$p$}-elastic energy of curves},
	url = {https://doi.org/10.1007/s00028-022-00791-w},
	volume = {22},
	year = {2022},
	bdsk-url-1 = {https://doi.org/10.1007/s00028-022-00791-w}}

@article{BHV,
	author = {Blatt, Simon and Hopper, Christopher and Vorderobermeier, Nicole},
	date-added = {2022-07-04 13:47:10 +0900},
	date-modified = {2022-07-04 13:47:25 +0900},
	doi = {10.1515/anona-2022-0244},
	fjournal = {Advances in Nonlinear Analysis},
	issn = {2191-9496},
	journal = {Adv. Nonlinear Anal.},
	mrclass = {53E99 (53A04)},
	mrnumber = {4418802},
	number = {1},
	pages = {1383--1411},
	title = {A regularized gradient flow for the {$p$}-elastic energy},
	url = {https://doi.org/10.1515/anona-2022-0244},
	volume = {11},
	year = {2022},
	bdsk-url-1 = {https://doi.org/10.1515/anona-2022-0244}}

@article{nabe14,
	author = {Watanabe, Kohtaro},
	date-added = {2022-04-09 17:42:42 +0900},
	date-modified = {2022-04-09 17:42:46 +0900},
	doi = {10.2996/kmj/1404393898},
	fjournal = {Kodai Mathematical Journal},
	issn = {0386-5991},
	journal = {Kodai Math. J.},
	mrclass = {49J10 (31B30 33E05)},
	mrnumber = {3229087},
	mrreviewer = {Teemu Lukkari},
	number = {2},
	pages = {453--474},
	title = {Planar {$p$}-elastic curves and related generalized complete elliptic integrals},
	url = {https://doi.org/10.2996/kmj/1404393898},
	volume = {37},
	year = {2014},
	bdsk-url-1 = {https://doi.org/10.2996/kmj/1404393898}}

@article{SW20,
	author = {Shioji, Naoki and Watanabe, Kohtaro},
	date-added = {2022-04-09 17:37:37 +0900},
	date-modified = {2022-04-10 14:00:10 +0900},
	doi = {10.4310/CAG.2020.v28.n6.a6},
	fjournal = {Communications in Analysis and Geometry},
	issn = {1019-8385},
	journal = {Comm. Anal. Geom.},
	mrclass = {53E10 (53A04)},
	mrnumber = {4184824},
	mrreviewer = {Chun-Chi Lin},
	number = {6},
	pages = {1451--1487},
	title = {Total {$p$}-powered curvature of closed curves and flat-core closed {$p$}-curves in {${\bf S}^2(G)$}},
	url = {https://doi.org/10.4310/CAG.2020.v28.n6.a6},
	volume = {28},
	year = {2020},
	bdsk-url-1 = {https://doi.org/10.4310/CAG.2020.v28.n6.a6}}

@article{Poz20,
	author = {Pozzetta, Marco},
	date-added = {2022-04-09 17:35:56 +0900},
	date-modified = {2022-04-09 17:36:02 +0900},
	fjournal = {Journal of Convex Analysis},
	issn = {0944-6532},
	journal = {J. Convex Anal.},
	mrclass = {49Q15 (49Q10 49Q20 53A07)},
	mrnumber = {4158524},
	mrreviewer = {Erik J. Balder},
	number = {3},
	pages = {845--879},
	title = {A varifold perspective on the {$p$}-elastic energy of planar sets},
	volume = {27},
	year = {2020}}

@article{OPW20,
	author = {Okabe, Shinya and Pozzi, Paola and Wheeler, Glen},
	date-added = {2022-04-09 17:33:38 +0900},
	date-modified = {2022-04-09 17:33:46 +0900},
	doi = {10.1007/s00208-019-01885-6},
	fjournal = {Mathematische Annalen},
	issn = {0025-5831},
	journal = {Math. Ann.},
	mrclass = {35K92 (53A04 53E99)},
	mrnumber = {4150936},
	mrreviewer = {Rongpei Huang},
	number = {1-2},
	pages = {777--828},
	title = {A gradient flow for the {$p$}-elastic energy defined on closed planar curves},
	url = {https://doi.org/10.1007/s00208-019-01885-6},
	volume = {378},
	year = {2020},
	bdsk-url-1 = {https://doi.org/10.1007/s00208-019-01885-6}}

@article{NP20,
	author = {Novaga, Matteo and Pozzi, Paola},
	date-added = {2022-04-09 17:32:49 +0900},
	date-modified = {2022-04-09 17:32:57 +0900},
	doi = {10.1137/19M1262292},
	fjournal = {SIAM Journal on Mathematical Analysis},
	issn = {0036-1410},
	journal = {SIAM J. Math. Anal.},
	mrclass = {53E99 (35K55 35K92 53A04)},
	mrnumber = {4062804},
	number = {1},
	pages = {682--708},
	title = {A second order gradient flow of {$p$}-elastic planar networks},
	url = {https://doi.org/10.1137/19M1262292},
	volume = {52},
	year = {2020},
	bdsk-url-1 = {https://doi.org/10.1137/19M1262292}}

@inproceedings{MM98,
	author = {Masnou, S. and Morel, J.-M.},
	booktitle = {Proceedings 1998 International Conference on Image Processing},
	date-added = {2022-04-09 17:27:30 +0900},
	date-modified = {2022-04-11 08:52:05 +0900},
	doi = {10.1109/ICIP.1998.999016},
	pages = {259-263},
	title = {Level lines based disocclusion},
	volume = {3},
	year = {1998},
	bdsk-url-1 = {https://doi.org/10.1109/ICIP.1998.999016}}

@article{LP22,
	author = {L\'{o}pez, Rafael and P\'{a}mpano, \'{A}lvaro},
	date-added = {2022-04-09 17:20:07 +0900},
	date-modified = {2022-04-09 17:20:12 +0900},
	doi = {10.1016/j.na.2021.112661},
	fjournal = {Nonlinear Analysis. Theory, Methods \& Applications. An International Multidisciplinary Journal},
	issn = {0362-546X},
	journal = {Nonlinear Anal.},
	mrclass = {53C42 (34C05 49Q10)},
	mrnumber = {4340791},
	pages = {Paper No. 112661, 22},
	title = {Stationary soap films with vertical potentials},
	url = {https://doi.org/10.1016/j.na.2021.112661},
	volume = {215},
	year = {2022},
	bdsk-url-1 = {https://doi.org/10.1016/j.na.2021.112661}}

@article{LS_85,
	author = {Langer, Joel and Singer, David A.},
	date-added = {2022-04-09 17:15:45 +0900},
	date-modified = {2022-04-09 17:15:50 +0900},
	doi = {10.1016/0040-9383(85)90046-1},
	fjournal = {Topology. An International Journal of Mathematics},
	issn = {0040-9383},
	journal = {Topology},
	mrclass = {58E05 (53A04 58F17)},
	mrnumber = {790677},
	mrreviewer = {A. J. Tromba},
	number = {1},
	pages = {75--88},
	title = {Curve straightening and a minimax argument for closed elastic curves},
	url = {https://doi.org/10.1016/0040-9383(85)90046-1},
	volume = {24},
	year = {1985},
	bdsk-url-1 = {https://doi.org/10.1016/0040-9383(85)90046-1}}

@article{FKN18,
	author = {Ferone, Vincenzo and Kawohl, Bernd and Nitsch, Carlo},
	date-added = {2022-04-09 17:12:49 +0900},
	date-modified = {2022-04-09 17:12:55 +0900},
	doi = {10.4310/MRL.2018.v25.n2.a9},
	fjournal = {Mathematical Research Letters},
	issn = {1073-2780},
	journal = {Math. Res. Lett.},
	mrclass = {49Q10 (51M16)},
	mrnumber = {3826833},
	mrreviewer = {Vanja Nikoli\'{c}},
	number = {2},
	pages = {521--533},
	title = {Generalized elastica problems under area constraint},
	url = {https://doi.org/10.4310/MRL.2018.v25.n2.a9},
	volume = {25},
	year = {2018},
	bdsk-url-1 = {https://doi.org/10.4310/MRL.2018.v25.n2.a9}}

@article {DMOY,
    AUTHOR = {Dall'Acqua, Anna and M\"uller, Marius and Okabe, Shinya and
              Yoshizawa, Kensuke},
     TITLE = {An obstacle problem for the {$p$}-elastic energy},
   JOURNAL = {Calc. Var. Partial Differential Equations},
  FJOURNAL = {Calculus of Variations and Partial Differential Equations},
    VOLUME = {63},
      YEAR = {2024},
    NUMBER = {6},
     PAGES = {Paper No. 145, 43},
      ISSN = {0944-2669,1432-0835},
   MRCLASS = {49J40 (35B38 35B65 35J70 53A04)},
  MRNUMBER = {4761861},
MRREVIEWER = {Jehan\ Oh},
       DOI = {10.1007/s00526-024-02752-2},
       URL = {https://doi.org/10.1007/s00526-024-02752-2},
}

@article{DFLM,
	author = {Dal Maso, G. and Fonseca, I. and Leoni, G. and Morini, M.},
	date-added = {2022-04-09 17:08:03 +0900},
	date-modified = {2022-04-09 17:08:09 +0900},
	doi = {10.1137/070697823},
	fjournal = {SIAM Journal on Mathematical Analysis},
	issn = {0036-1410},
	journal = {SIAM J. Math. Anal.},
	mrclass = {49J45 (26A45 49J05 65K10 68U10 94A08)},
	mrnumber = {2481298},
	number = {6},
	pages = {2351--2391},
	title = {A higher order model for image restoration: the one-dimensional case},
	url = {https://doi.org/10.1137/070697823},
	volume = {40},
	year = {2009},
	bdsk-url-1 = {https://doi.org/10.1137/070697823}}

@article{BM04,
	author = {Bellettini, G. and Mugnai, L.},
	date-added = {2022-04-09 17:07:17 +0900},
	date-modified = {2022-04-09 17:07:23 +0900},
	doi = {10.1016/j.anihpc.2004.01.001},
	fjournal = {Annales de l'Institut Henri Poincar\'{e} C. Analyse Non Lin\'{e}aire},
	issn = {0294-1449},
	journal = {Ann. Inst. H. Poincar\'{e} C Anal. Non Lin\'{e}aire},
	mrclass = {49J45 (49Q20 58E50 74G65 74K10)},
	mrnumber = {2097034},
	mrreviewer = {Antoine Gloria},
	number = {6},
	pages = {839--880},
	title = {Characterization and representation of the lower semicontinuous envelope of the elastica functional},
	url = {https://doi.org/10.1016/j.anihpc.2004.01.001},
	volume = {21},
	year = {2004},
	bdsk-url-1 = {https://doi.org/10.1016/j.anihpc.2004.01.001}}

@article{AGM,
	author = {Arroyo, J. and Garay, O. J. and Menc\'{\i}a, J. J.},
	date-added = {2022-04-09 17:03:36 +0900},
	date-modified = {2022-04-09 17:03:41 +0900},
	doi = {10.1016/S0393-0440(03)00047-0},
	fjournal = {Journal of Geometry and Physics},
	issn = {0393-0440},
	journal = {J. Geom. Phys.},
	mrclass = {58E10 (53A04 74K10)},
	mrnumber = {2007599},
	mrreviewer = {Joel L. Weiner},
	number = {2-3},
	pages = {339--353},
	title = {Closed generalized elastic curves in {$S^2(1)$}},
	url = {https://doi.org/10.1016/S0393-0440(03)00047-0},
	volume = {48},
	year = {2003},
	bdsk-url-1 = {https://doi.org/10.1016/S0393-0440(03)00047-0}}

@article{AM03,
	author = {Ambrosio, Luigi and Masnou, Simon},
	date-added = {2022-04-09 17:01:02 +0900},
	date-modified = {2022-04-09 17:01:09 +0900},
	doi = {10.4171/IFB/72},
	fjournal = {Interfaces and Free Boundaries. Mathematical Analysis, Computation and Applications},
	issn = {1463-9963},
	journal = {Interfaces Free Bound.},
	mrclass = {94A08 (49J10 49J45 68U10)},
	mrnumber = {1959769},
	mrreviewer = {Guy Jumarie},
	number = {1},
	pages = {63--81},
	title = {A direct variational approach to a problem arising in image reconstruction},
	url = {https://doi.org/10.4171/IFB/72},
	volume = {5},
	year = {2003},
	bdsk-url-1 = {https://doi.org/10.4171/IFB/72}}

@article{AM17,
	author = {Acerbi, Emilio and Mucci, Domenico},
	date-added = {2022-04-09 17:00:23 +0900},
	date-modified = {2022-04-09 17:00:29 +0900},
	doi = {10.1017/S0308210516000202},
	fjournal = {Proceedings of the Royal Society of Edinburgh. Section A. Mathematics},
	issn = {0308-2105},
	journal = {Proc. Roy. Soc. Edinburgh Sect. A},
	mrclass = {49J45 (26A45 49Q15 53A07 65K10)},
	mrnumber = {3656702},
	mrreviewer = {Leonard Constantin Kreutz},
	number = {3},
	pages = {449--503},
	title = {Curvature-dependent energies: a geometric and analytical approach},
	url = {https://doi.org/10.1017/S0308210516000202},
	volume = {147},
	year = {2017},
	bdsk-url-1 = {https://doi.org/10.1017/S0308210516000202}}

\end{document}